\documentclass[12pt]{article} 
\usepackage[sectionbib]{natbib}
\usepackage{array,epsfig,fancyheadings,rotating}
\usepackage[]{hyperref}  
\usepackage{sectsty, secdot}
\usepackage{algorithm}
\usepackage{algorithmicx}
\usepackage{setspace}
\sectionfont{\fontsize{12}{14pt plus.8pt minus .6pt}\selectfont}
\renewcommand{\theequation}{\thesection\arabic{equation}}
\subsectionfont{\fontsize{12}{14pt plus.8pt minus .6pt}\selectfont}

\textwidth=31.9pc
\textheight=46.5pc
\oddsidemargin=1pc
\evensidemargin=1pc
\headsep=15pt
\topmargin=.6cm
\parindent=1.7pc
\parskip=0pt

\usepackage{amsmath}
\usepackage{amssymb}
\usepackage{amsfonts}
\usepackage{multirow}
\usepackage{amsthm}

\setcounter{page}{1}
\newcommand\norm[1]{\left\lVert#1\right\rVert}
\newtheorem{theorem}{Theorem}
\newtheorem{lemma}{Lemma}

\theoremstyle{definition}

\pagestyle{fancy}

\pagestyle{fancy}

\lhead[\fancyplain{} \leftmark]{}
\chead[]{}
\rhead[]{\fancyplain{}\rightmark}
\cfoot{}


\begin{document}


\renewcommand{\baselinestretch}{2}

\markright{ \hbox{\footnotesize\rm Statistica Sinica
}\hfill\\[-13pt]
\hbox{\footnotesize\rm
}\hfill }

\markboth{\hfill{\footnotesize\rm FIRSTNAME1 LASTNAME1 AND FIRSTNAME2 LASTNAME2} \hfill}
{\hfill {\footnotesize\rm Selective Confidence Intervals} \hfill}

\renewcommand{\thefootnote}{}
$\ $\par


\fontsize{12}{14pt plus.8pt minus .6pt}\selectfont \vspace{0.8pc}
\centerline{\large\bf Selective Confidence Intervals for Martingale}
\vspace{2pt} \centerline{\large\bf  Regression Model}
\vspace{.4cm} \centerline{Ka Wai Tsang and Wei Dai} \vspace{.4cm} \centerline{\it
The Chinese University of Hong Kong, Shenzhen} \vspace{.55cm} \fontsize{9}{11.5pt plus.8pt minus
.6pt}\selectfont


\begin{quotation}
\noindent {\it Abstract:}
In this paper we consider the problem of constructing confidence intervals for coefficients of martingale regression models (in particular, time series models) after variable selection. Although constructing confidence intervals are common practice in statistical analysis, it is challenging in our framework due to the data-dependence of the selected model and the correlation among the variables being selected and not selected. We first introduce estimators for the selected coefficients and show that it is consistent under martingale regression model, in which the observations can be dependent and the errors can be heteroskedastic. Then we use the estimators together with a resampling approach to construct confidence intervals. Our simulation results show that our approach outperforms other existing approaches in various data structures.

\vspace{9pt}
\noindent {\it Key words and phrases:}
martingale regression model, selective confidence interval, variable selection
\par
\end{quotation}\par

\def\thefigure{\arabic{figure}}
\def\thetable{\arabic{table}}

\renewcommand{\theequation}{\thesection.\arabic{equation}}

\fontsize{12}{14pt plus.8pt minus .6pt}\selectfont

\setcounter{section}{1} 
\setcounter{equation}{0} 

\lhead[\footnotesize\thepage\fancyplain{}\leftmark]{}\rhead[]{\fancyplain{}\rightmark\footnotesize\thepage}

\noindent {\bf 1. Introduction}

Consider the linear regression  model
\begin{align}
y_t=\sum_{j=1}^{p}\beta_j x_{tj}+\varepsilon_t,\quad t=1,\ldots,n, \label{Linear Model}
\end{align}
with $p$ predictor variables $\mathbf{x}_t=(x_{t1},\ldots,x_{tp})^T$ and $n$ samples that can be correlated. The error terms $\varepsilon_t$ are usually assumed to be independent of $\mathbf{x}_t$ for many applications. However, it is too strong for the regression models in financial time series. Instead, this paper considers the martingale regression which has the form \eqref{Linear Model} with $\{\varepsilon_t\}$ being a local martingale sequence and the components of $\mathbf{x}_t$ containing lagged variables $y_{t-1},y_{t-2},\ldots$ and other factor variables. The well-known AR(p)-GARCH(h,k) model is a special case with $\mathbf{x}_t=(y_{t-1},\ldots,y_{t-p})^T$ and 
\begin{align}
\varepsilon_t=\sigma_t\xi_t,\quad \sigma_t^2=\omega+\sum_{i=1}^{h}b_i\sigma_{t-i}^2+\sum_{j=1}^{k}a_j\varepsilon^2_{t-j},\label{Garch Model}
\end{align}
in which $\xi_t$ are i.i.d.\ with mean 0 and variance 1; see Section 2.6.3 of Guo et al.\ (2017). We are interested in estimating confidence intervals for some selected unknown coefficients $\beta_j$ in \eqref{Linear Model} when $p>n$. Zhang and Zhang (2014) consider a similar problem for deterministic $\mathbf{X}$ and $\varepsilon_t$ are independent for coefficients selected by scaled lasso. Belloni et al.\ (2015) develop uniformly valid confidence regions for coefficients selected by some lasso-type methods under the assumptions that $(\mathbf{x}_t,\varepsilon_t)$ are i.i.d.\ and $\varepsilon_t$ is independent of $\mathbf{x}_t$. Lee and Wu (2018) propose a bootstrap method to estimate the distributions of the least squares estimators after some data-driven model selection procedure. They consider independent $\mathbf{x}_t$ and $\varepsilon_t$ and assume that $p$ is constant. However, the problem of confidence intervals estimation after model selection for the martingale regression model \eqref{Linear Model} is still largely untouched. Let $\mathbf{X}=(\mathbf{x}_1,\ldots,\mathbf{x}_n)^T$, $\mathbf{Y}=(y_1,\ldots,y_n)^T$, $\boldsymbol{\varepsilon}=(\varepsilon_1,\ldots,\varepsilon_n)^T$, and $\boldsymbol{\beta}=(\beta_1,\ldots,\beta_p)^T$, then model \eqref{Linear Model} can be written as 
\begin{align}
\mathbf{Y}=\mathbf{X}\boldsymbol{\beta}+\boldsymbol{\varepsilon}. \label{Linear Model in Capital}
\end{align}
When $\mathbf{X}$ is a nonrandom full rank matrix with $n>p$, and $\varepsilon_t$ are independent $\mathcal{N}(0,\sigma^2)$, it is well-known that 
\begin{align}
\frac{\hat{\beta}^{\text{ols}}_j-
	\beta_j}{s\sqrt{c_{jj}}}\sim t_{n-p},\quad j=1,\ldots,p, \label{t test statistic}
\end{align}
where $c_{jj}$ is the $j$th diagonal element of $(\mathbf{X}^T\mathbf{X})^{-1}$, $\hat{\boldsymbol{\beta}}^{\text{ols}}=(\hat{\beta}_1^{\text{ols}},\ldots,\hat{\beta}_p^{\text{ols}})=(\mathbf{X}^T\mathbf{X})^{-1}\mathbf{X}^T\mathbf{Y}$ is the ordinary least squares (OLS) estimate, and $s^2=\sum_{t=1}^{n}(y_t-\mathbf{x}_t^T\hat{\boldsymbol{\beta}}^{\text{ols}})^2/(n-p)$ is the sample variance of  $\varepsilon_t$. If we assume that $\mathbf{X}^T\mathbf{X}/c_n$ converges in probability to a nonrandom matrix with positive eigenvalues for some nonrandom constants $c_n$ such that $\lim_{n\rightarrow \infty}c_n=\infty$, then the results in \eqref{t test statistic} still holds asymptotically under some additional regularity conditions, see Section 1.5.3 of Lai and Xing (2008). However, when $p>n$, the convergence of $\mathbf{X}^T\mathbf{X}/c_n$ mentioned above becomes infeasible and estimating confidence intervals for $\beta_j$ becomes challenging. Actually as the OLS estimator cannot be applied due to the singularity of $\mathbf{X}^T\mathbf{X}$, coefficients estimation for high-dimensional ($p>n$) regression model has been a long standing problem in statistics.

With the assumption that $\boldsymbol{\beta}$ satisfies certain sparsity conditions, the issues due to high dimension can be partly solved by selecting a subset $J \subset \{1,\ldots,p\}$ of size $m\ll n$ and assuming $\beta_j=0$ for $j\notin J$ to reduce the dimension. The traditional idea of best subset selection methods are first introduced by Efroymson (1960), and lead to two popular criteria for model selection, Akaike's AIC and Schwarz's BIC. AIC in Akaike (1973, 1974) chooses a model that minimizes the Kullback-Lerbler (KL) divergence of the fitted model from the true model, while BIC in Schwarz (1978) chooses a model that minimizes a criterion that is formed by a Bayesian approach. Other criteria with further development have been proposed by Hannan and Quinn (1979), Rao and Wu (1989), Wei (1992), Hurvich and Tsai (1991) and Shao (1997). Since it is infeasible to try all possible models to find the the one with minimum criterion's value in high-dimensional regression, they are usually carried out in forward stepwise manner. Ing and Lai (2011) proposes a high-dimensional information criterion (HDIC) to choose the best model along a path of models that are selected by a forward stepwise method called orthogonal greedy algorithm (OGA). HDIC is similar to AIC and BIC with the penalty term in AIC or BIC multiplied by $\log p$ (namely HDAIC and HDBIC correspondingly in Ing and Lai (2011)) for the case $p\gg n$. Other forward stepwise algorithms for high-dimensional regression can be found in B{\"u}hlmann (2006), Chen and Chen (2008), Wang (2009) and Fan and Lv (2008). Another popular approach for variable selection is by penalized least squares estimators. Tibshirani (1996) proposes an estimator that minimize an objective function consisting of the least squares errors and a $\text{L}_1$ penalty term $\norm{\boldsymbol{\beta}}_1=\sum_{i=1}^{p}|\beta_j|$. Subsequent modifications and refinements of penalized least squares methods are developed by Zou and Hastie (2005), Zou (2006), Yuan and Lin (2006), Bickel et al.\ (2009), and Zhang (2010). Although many variable selection methods have been proposed, as Ing (2019) points out that ``the vast majority of studies on model \eqref{Linear Model}, however, have focused on situations where $\mathbf{x}_t$ are nonrandom and $\varepsilon_t$ are independently and identically distributed (i.i.d.) or $(\mathbf{x}_t,\varepsilon_t)$ are i.i.d., which regrettably preclude most serially correlated data." Ing (2019) gives analysis of OGA for high-dimensional regression models with dependent observations $(\mathbf{x}_t,y_t)$ and shows the convergence of the prediction error of OGA. Therefore, we also choose OGA, which is presented in Section 3.1, to do variable selection on model \eqref{Linear Model}. However, note that our theoretical results in this paper do not require any properties of OGA, and thus also hold for other selection methods for martingale regression model \eqref{Linear Model}. Let $\hat{J}$ of size $m$ be the selected set after $m$ OGA iterations, then model \eqref{Linear Model} can be written as 
\begin{align}
y_t=\sum_{j\in\hat{J}}\beta_jx_{tj}+\sum_{j\in\hat{J}^c}\beta_jx_{tj}+\varepsilon_t,\quad t=1,\ldots,n, \label{Linear Model decomposition}
\end{align}
and we are interested in the confidence intervals of $\beta_j,j\in \hat{J}$. Denote $\mathbf{M}_J$ as the sub-matrix of $\mathbf{M}=(\mathbf{M}_1,\ldots,\mathbf{M}_p)$ such that $\mathbf{M}_J=(\mathbf{M}_j)_{j\in J}$  and $\mathbf{v}_J=(\mathbf{v}_j)_{j \in J}$ as the sub-vector of $\mathbf{v}=(v_1,\ldots,v_p)^T$. The matrix-vector form of model \eqref{Linear Model decomposition} is 
\begin{align}
\mathbf{Y}=\mathbf{X}_{\hat{J}}\boldsymbol{\beta}_{\hat{J}}+\mathbf{w}, \label{Linear Model decomposition in Capital}
\end{align}
where $\mathbf{w}=\mathbf{X}_{\hat{J}^c}\boldsymbol{\beta}_{\hat{J}^c}+\boldsymbol{\varepsilon}$. If we replace $x_{tj}$ and $y_t$ in \eqref{Linear Model decomposition} by $x_{tj}-\mu_{x,j}$ and $y_t-\mu_y$, where $\mu_{x,j}$ and $\mu_y$ are the unconditional expectations of the weakly stationary time series $x_{tj}$ and $y_t$, then the equality in \eqref{Linear Model decomposition} still holds. Hence, without loss of generality we assume $E\mathbf{x}_t=\boldsymbol{0}$ and thus $E\mathbf{w}=\boldsymbol{0}$, which makes $\mathbf{w}$ look like an error term. It is a common practice that the confidence intervals for $\boldsymbol\beta_{\hat{J}}$ in \eqref{Linear Model decomposition in Capital} are constructed by assuming \eqref{t test statistic} hold with $\mathbf{X}$ being replaced by $\mathbf{X}_{\hat{J}}$. However, such confidence intervals are invalid based on two major effects: {\it selection effect} and {\it spill-over effect}.

\paragraph{Selection effect:}
Usually people focus on the coefficients in the selected set after model selection. However, as noted by Sori{\'c} (1989) that ``in a large number of 95\% confidence intervals, 95\% of them contain the population parameter (e.g., difference between population means); but it would be wrong to imagine that the same rule also applies to a large number of 95\% NZ (not containing zero) confidence intervals." To illustrate this statement, suppose there are $X_i\sim N(\mu_i,1)$ for $i=1,\ldots,5$. The confidence interval for each $\mu_i$ can be constructed by the normal distribution, e.g. $X_i\pm z_{1-\alpha/2}$ for a $100(1-\alpha)\%$ confidence interval, where $ z_{1-\alpha/2}$ is the $(1-\alpha/2)^{\text{th}}$ quantile of $N(0,1)$. However, if we are interested in the confidence interval of $X_{\text{I}_{\max}}$, where $\text{I}_{\max}=\arg \max_i X_i$, then $X_{\text{I}_{\max}}$ is no longer $N(\mu_{\text{I}_{\max}},1)$ distributed. In particular, we can see that if $X_i$ are i.i.d.\ $N(0,1)$, $EX_{\text{I}_{\max}}> \mu_{\text{I}_{\max}}=0$. Therefore , $X_{\text{I}_{\max}}\pm z_{1-\alpha/2}$ is not a valid confidence interval for $\mu_{\text{I}_{\max}}$. Similarly, many model selection methods select the best $m$ out of $p$ variables based on different selection criteria, and hence the distribution of $\hat{\beta}_j^{\text{ols}}, j\in \hat{J}$, may not approximately follow the distribution in \eqref{t test statistic}. We call this effect, which causes $E\hat{\beta}_j^{\text{ols}}\neq \beta_j$ due to model selection, as selection effect.

\paragraph{Spill-over effect:}
With the assumption that $x_{tj}$ are nonrandom and $\varepsilon_t$ are normal distributed in model \eqref{Linear Model}, Taylor et al.\ (2014) have developed conditional distributions for entries in $E\hat{\boldsymbol{\beta}}_{\hat{J}}^{\text{ols}}$ given that $\hat{J}$ is selected by certain types of methods, including OGA; see Lee and Taylor (2014, Section 8.2). The conditional distributions can then be used to construct valid confidence intervals for the entries of $E\hat{\boldsymbol{\beta}}_{\hat{J}}^{\text{ols}}$. However, since
\begin{align}
E\hat{\boldsymbol{\beta}}_{\hat{J}}^{\text{ols}}=\boldsymbol{\beta}_{\hat{J}}+\sum_{j\in \hat{J}^c}\beta_j(\mathbf{X}_{\hat{J}}^T\mathbf{X}_{\hat{J}})^{-1}\mathbf{X}_{\hat{J}}^T\mathbf{X}_j, \label{Biase of estimator}
\end{align}
when $\mathbf{X}$ is nonrandom. Therefore, unless $\beta_j=0$ (all relevant variables are selected) or $\|\mathbf{X}_{\hat{J}}^T\mathbf{X}_j\|=0$ (orthogonal) for $j\notin \hat{J}$, otherwise $E\hat{\boldsymbol{\beta}}_{\hat{J}}^{\text{ols}}\neq \boldsymbol{\beta}_{\hat{J}}$. Ing et al.\ (2017) have noticed this problem and called it spill-over effect. One way to avoid spill-over effect is assuming all relevant predictors $(\beta_j\neq 0)$ are selected asymptotically. Based on this assumption, Belloni et al.\ (2014) and Voorman et al.\ (2014) constructed an asymptotically normal estimator for coefficient $\beta_j$ and thus can construct asymptotically valid $p$-values. Lockhart et al. (2014) propose a test statistic for each newly selected variable. The distribution of the statistic is asymptotically $Exp(1)$ under the null hypothesis that all relevant predictors have been selected before the newly entered variable. 

In this paper, we handle selection effect and spill-over effect by a consistent estimator of $\boldsymbol\beta_{\hat{J}}$ and a resampling approach. Suppose we know the true $\boldsymbol{\beta}_{\hat{J}}$ and the distribution $\mathbf{w}$ in \eqref{Linear Model decomposition in Capital} such that we can generate 
$\mathbf{w}^{(b)}$ and $\mathbf{Y}^{(b)}=\mathbf{X}_{\hat{J}}\boldsymbol{\beta}_{\hat{J}}+\mathbf{w}^{(b)}$ with the same distribution as $\mathbf{w}$ and $\mathbf{Y}$. Then, to determine if $\theta$ belongs to a confidence interval for $\beta_j, j\in \hat{J}$, we can construct a test statistic $T_j=T_j(\mathbf{X},\mathbf{Y},\theta)$, e.g.\ the test statistic in \eqref{t test statistic} with $\mathbf{X}$ being replaced by $\mathbf{X}_{\hat{J}}$, and compare it with simulated $T_j^{(b)}=T_j(\mathbf{X},\mathbf{Y}^{(b)}_j,\theta)$, $b=1,\ldots, B$, where $\mathbf{Y}^{(b)}_j=\sum_{i \in \hat{J}\backslash\{j\}}\beta_i \mathbf{X}_i +\theta \mathbf{X}_j+ \mathbf{w}^{(b)}$. We exclude $\theta$ in a confidence interval for $\beta_j$ if $T_j$ is an extreme value to the empirical distribution formed by $T_j^{(b)}$. If $\beta_j=\theta$, then $T_j^{(b)}$ has the same distribution as $T_j$ and hence their difference should not be significant, and $\beta_j$ should not be excluded in the confidence interval. This is the idea of exact method introduced in Chuang and Lai (2000). As the model parameters are unknown in practice, Chuang and Lai (2000) ``hybridize" the exact method and bootstrap resampling to develop a resampling method called hybrid resampling for constructing confidence intervals. We follow their approach to estimate valid confidence for $\beta_j$. We first present our estimators for selected coefficients and generating mechanism of $\mathbf{w}^{(b)}$ in Section 2. Assumptions and theorems for the consistency of our estimators are presented there. Section 3.1 gives an introduction of OGA, which is used for variable selection for the entire paper. Our test statistic functions $T_j(\mathbf{X},\mathbf{Y},\theta)$ for $j \in \hat{J}$ are described in Section 3.2. Collecting the results in Section 2 and 3, Section 3.3 presents a hybrid resampling approach to construct confidence intervals for $\beta_j, j \in \hat{J}$. Simulation studies to illustrate our theoretical results and the performance of our algorithms are presented in Section 4. Section 5 gives further discussion and some concluding remarks.

\section{$\boldsymbol{\beta}_{\hat{J}}$ estimation and $\mathbf{w}^{(b)}$ generation }
We first consider a subset $J$ of size $m$ that is not selected based on observed $\mathbf{X}$ and $\mathbf{Y}$. In such case, there is no selection effect but may still have spill-over effect if the columns of $\mathbf{X}$ are correlated and $|\beta_j|>0$ for some $ j\notin J$. The model we consider in Section 2.1 is 
\begin{align}
\mathbf{Y}=\mathbf{X}_J\boldsymbol{\beta}_J+\boldsymbol{w}_J, \label{Decomposition of Y}
\end{align}
where $\boldsymbol{w}_J=(w_{J1},\ldots,w_{Jn})^T=\mathbf{X}_{J^c}\boldsymbol{\beta}_{J^c}+\boldsymbol{\varepsilon}$.

\subsection{Consistent $\boldsymbol{\beta}_{J}$ estimation}
From \eqref{Biase of estimator}, we see that the OLS estimator $\hat{\boldsymbol{\beta}}_J^{\text{ols}}=(\mathbf{X}_J^T\mathbf{X}_J)^{-1}\mathbf{X}_J^T\mathbf{Y}$ is not a consistent estimator for $\boldsymbol{\beta}_J$ in general. However, if there exists $\mathbf{Z}=(\mathbf{Z}_1,\ldots,\mathbf{Z}_n)^T$ such that (i) $\mathbf{Z}_t$ is uncorrelated with $w_{Jt}$ and (ii) $\frac{1}{n}\mathbf{Z}^T\mathbf{X}_J$ converges in probability to a nonsingular matrix, then the method of instrumental variables described in Section 9.7.1 of Lai and Xing (2008) can be applied to give a consistent estimator $\hat{\boldsymbol{\beta}}_J^{\text{IV}}=(\hat{\mathbf{X}}^T_J\hat{\mathbf{X}}_J)^{-1}\hat{\mathbf{X}}^T_J\mathbf{Y}$, where $\hat{\mathbf{X}}_J=\mathbf{Z}(\mathbf{Z}^T\mathbf{Z})^{-1}\mathbf{Z}^T\mathbf{X}_J$. To apply a similar idea of instrumental variables, we made the following assumptions on model \eqref{Linear Model}.\\

\noindent
\textbf{Assumption A} (Model Design)
\begin{itemize}
	\item [A1.]$x_{tj}=\lambda_j^Tf_t+e_{tj}$, where $f_t\in\mathbb{R}^r$ is a vector of common factors, $\lambda_j$ is a vector of factor loadings associated with $f_t$, and $e_{tj}$ is the idiosyncratic component of $x_{tj}$.
	\item [A2.] $p\geq O_p(n)$, $\lim_{n\rightarrow \infty}\frac{m^2}{n}=0$
	\item [A3.] $\sum_{j=1}^{p}|\beta_j|\leq M $ for some constant $M$.
\end{itemize}

The factor model for $x_{tj}$ in Assumption A1 is considered by Bai and Ng (2002). They suggest using factor model for analyzing financial data and point out that ``the idea that variations in a large number of economic variables can be modeled by a small number of reference variables is appealing and is used in many economic analyses. For example, asset returns are often modeled as a function of a small number of factors$\ldots$ Stock and Watson (1989) showed that the forecast error of a large number of macroeconomic variables can be reduced by including diffusion indexes, or factors, in structural as well as nonstructural forecasting models. In demand analysis, Engel curves can be expressed in terms of a finite number of factors. Factor analysis also provides a convenient way to study the aggregate implications of microeconomic behavior, as shown in Forni and Lippi (1997)." Assumption A2 allows $p\gg n$, but the number $m$ of selected variables cannot increase too fast as $n\rightarrow \infty$. Ing and Lai (2011) suggest $m=O(\sqrt{n/\log p})$,  which satisfies Assumption A2, to be the number of selection for OGA. Assumption A3 allows the number of relevant variables $(|\beta_j|>0)$ greater than $m$ and the sum of the absolute values of $\beta_j$ outside the selection set, i.e. $\sum_{j\in J^c}|\beta_j|$, does not converge to 0 as $n\rightarrow \infty$.

Let $\mathbf{F}=(f_1,\ldots,f_n)^T\in \mathbb{R}^{n\times r}$, $\mathbf{\Lambda}=(\lambda_1,\ldots,\lambda_p)^T\in \mathbb{R}^{p\times r}$, $\mathbf{E}_j=(e_{1j},\ldots,e_{nj})^T$, and $\mathbf{E}=(\mathbf{E}_1,\ldots,\mathbf{E}_p)$. Then we have 
\begin{align}
\mathbf{X}_j=\mathbf{F}\lambda_j+\mathbf{E}_j,\quad  \mathbf{X}=\mathbf{F}\mathbf{\Lambda}^T+\mathbf{E} \label{Common Factor}
\end{align}
and the model \eqref{Decomposition of Y} can be written as 
\begin{align}
\mathbf{Y}=(\mathbf{I}-\mathbf{P}_F)\mathbf{X}_J\boldsymbol{\beta}_J+\mathbf{P}_F\mathbf{X}_J\boldsymbol{\beta}_J+\mathbf{w}_J=\tilde{\mathbf{X}}_J\boldsymbol{\beta}_J+\tilde{\mathbf{w}}_J, \label{Decompositon of Y_2}
\end{align}
where $\mathbf{P}_F=\mathbf{F}(\mathbf{F}^T\mathbf{F})^{-1}\mathbf{F}$, $\tilde{\mathbf{X}}_J=(\mathbf{I}-\mathbf{P}_F)\mathbf{X}_J$, $\tilde{\mathbf{w}}_{J}=\mathbf{P}_F\mathbf{X}_J\boldsymbol{\beta}_J+\mathbf{w}_{J}$. If we set $\mathbf{Z}=\tilde{\mathbf{X}}_J$, then we can check that $E(\mathbf{Z}^T\tilde{\mathbf{w}}_{J})=\mathbf{0}$ if $f_t$, $e_{tj}$ and $\varepsilon_t$ are all independent and $E(e_{tj})=0$. If we further have $\frac{1}{n}\mathbf{Z}^T\tilde{\mathbf{X}}_J=\frac{1}{n}\tilde{\mathbf{X}}_J^T\tilde{\mathbf{X}}_J$ converges to a nonsingular matrix in probability, which is stated in Theorem 3, then we can estimate $\boldsymbol{\beta}_J$ consistently by the method of instrumental variables. However, the factor matrix $\mathbf{F}$ and its rank $r=rank(\mathbf{F})$ are unknown in practice. Bai and Ng (2002) propose the following procedure to estimate $r$ and $F$.
\begin{algorithm}[h]
	\begin{algorithmic}[]
		\State \textbf{INPUT}: $\mathbf{X}\in \mathbb{R}^{n\times p}$,$\mathbf{Y}\in \mathbb{R}^{n}$
		\State \textbf{Step 1}:  For $k=1$ to $k_{\max}$\\
		\begin{enumerate}
			\item [1.1] Calculate factor loadings $\bar{\mathbf{\Lambda}}^k$, which is constructed as $\sqrt{p}$ times the eigenvectors corresponding to the $k$ largest eigenvalues of the $p\times p$ matrix $\mathbf{X}\mathbf{X}^T$. 
			\item [1.2] Calculate $\bar{\mathbf{F}}^k=\mathbf{X}\bar{\mathbf{\Lambda}}^k/p$ and  rescaled factors $\hat{\mathbf{F}}^k=\bar{\mathbf{F}}^k((\bar{\mathbf{F}}^k)^T\bar{\mathbf{F}}^k/n)^{1/2}$.
			\item [1.3] Compute $V(k)=\min_{\mathbf{\Lambda}^k}\norm{\mathbf{X}-\hat{\mathbf{F}}^k(\mathbf{\Lambda}^k)^T}^2$, where $\norm{\cdot}$ is the Frobenius norm ($\norm{A}^2=\sum_{i}\sum_{j}A_{ij}^2$).
		\end{enumerate}
		\State \textbf{Step 2}: Choose $\hat{k}$ that minimize 
		\begin{align*}
		\text{IC}(k)=\log(V(k))+k\Big(\frac{n+p}{np}\Big)\log \Big(\frac{np}{n+p}\Big)
		\end{align*}
		\State  \textbf{OUTPUT}: $\hat{k}$ and $\hat{\mathbf{F}}^{\hat{k}}$
	\end{algorithmic}
	\caption{ Factors and rank estimation}
	\label{alg:algorithm1}
\end{algorithm}

Bai and Ng (2002) show the convergence of the estimated rank and factor matrix under the following assumptions.\\

\noindent
\textbf{Assumption B} (Factors)
\begin{itemize}
	\item[B1.]$E\norm{f_t}^4<\infty$ and $\frac{1}{n}\mathbf{F}^T\mathbf{F}\rightarrow \mathbf{\Sigma}_F$ as $n\rightarrow \infty$ for some positive definite matrix $\mathbf{\Sigma}_F \in \mathbb{R}^{r\times r}$.
	\item[B2.]  $\norm{\lambda_i}\leq \bar{\lambda} < \infty$ and $\norm{\frac{1}{p}\mathbf{\Lambda}^T\mathbf{\Lambda}-\mathbf{\Sigma}_\Lambda}\rightarrow 0$ as $p\rightarrow \infty$ for some  positive definite matrix $\mathbf{\Sigma}_\Lambda \in \mathbb{R}^{r\times r}$.
	\item[B3.] For the same constant $M$ in Assumption A3, assume
	\begin{enumerate}
		\item $E(e_{ti})=0$, $E|e_{ti}|^8\leq M$;
		\item $E(e_s^Te_t/p)=E(p^{-1}\sum_{i=1}^{p}e_{si}e_{ti})=\gamma_p(s,t),|\gamma_p(s,s)|\leq M$ for all $s$, and $n^{-1}\sum_{s=1}^{n}\sum_{t=1}^{n}|\gamma_p(s,t)|\leq M$.
		\item $E(e_{ti}e_{tj})=\tau_{ij,t}$ with $|\tau_{ij,t}|\leq |\tau_{ij}|$  for some $\tau_{ij}$ and for all $t$; in addition, $N^{-1}\sum_{i=1}^{p}\sum_{j=1}^{p}|\tau_{ij}|\leq M$
		\item $E(e_{ti}e_{sj})=\tau_{ij,ts}$ and $(np)^{-1}\sum_{i=1}^{p}\sum_{j=1}^{p}\sum_{t=1}^{n}\sum_{s=1}^{n}|\tau_{ij,ts}|\leq M$
		\item for every $(t,s)$, $E|p^{-1/2}\sum_{i=1}^{p}[e_{si}e_{ti}-E(e_{si}e_{ti})]|^4\leq M$
	\end{enumerate}
	\item[B4.] Weak dependence between $\mathbf{F}$ and $\mathbf{E}$
	\begin{align*}
	E\left(\frac{1}{p}\sum_{i=1}^{p}\norm{\frac{1}{\sqrt{n}}\sum_{t=1}^{n}f_te_{ti}}^2 \right)\leq M
	\end{align*}
\end{itemize}
We present the following 2 theorems from Bai and Ng (2002) as our Theorems 1 and 2.
\begin{theorem}
	Under Assumptions B1 to B4, for any fixed $k\geq 1$, there exists a $(r\times k)$ matrix $\mathbf{H}^k$ with $\text{rank}(\mathbf{H}^k)=\min(k,r)$, such that
	\begin{align}
	\Big(\frac{1}{n}\sum_{t=1}^{n}\norm{\hat{f}_t^k-(\mathbf{H}^k)^Tf_t}^2\Big)=O_p(
	\frac{1}{n})
	\end{align}
	where $\hat{f}_t^k$ are the rows of $\hat{\mathbf{F}}^k$  in Algorithm 1
\end{theorem}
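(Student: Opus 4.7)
The plan is to follow the classical argument of Bai and Ng (2002): identify $\hat{\mathbf{F}}^k$ with the leading eigenvectors of a sample covariance matrix, substitute in the factor decomposition of $\mathbf{X}$, and bound the resulting residual terms using Assumption B. By step 1.1 of Algorithm 1 (followed by the rescaling in step 1.2), $\hat{\mathbf{F}}^k$ is, up to an orthonormal change of basis, proportional to the matrix whose columns are the $k$ leading eigenvectors of $\mathbf{X}\mathbf{X}^T/(np)$. Let $\mathbf{D}^k$ denote the diagonal matrix of the corresponding top-$k$ eigenvalues, so that the first-order characterization reads
$$\frac{1}{np}\mathbf{X}\mathbf{X}^T\hat{\mathbf{F}}^k = \hat{\mathbf{F}}^k\,\mathbf{D}^k.$$
I would then introduce the rotation
$$\mathbf{H}^k = \frac{1}{p}\mathbf{\Lambda}^T\mathbf{\Lambda}\cdot\frac{1}{n}\mathbf{F}^T\hat{\mathbf{F}}^k\cdot(\mathbf{D}^k)^{-1},$$
an $r\times k$ matrix. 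Assumptions B1 and B2 ensure $p^{-1}\mathbf{\Lambda}^T\mathbf{\Lambda}\to\mathbf{\Sigma}_\Lambda$ and $n^{-1}\mathbf{F}^T\mathbf{F}\to\mathbf{\Sigma}_F$, both positive definite, and together with the spectral structure of $\mathbf{D}^k$ this will yield $\text{rank}(\mathbf{H}^k)=\min(k,r)$.

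Next I would substitute $\mathbf{X}=\mathbf{F}\mathbf{\Lambda}^T+\mathbf{E}$ into the fixed-point identity and rearrange row-by-row to obtain the residual representation
$$\hat{f}_t^k-(\mathbf{H}^k)^T f_t = (\mathbf{D}^k)^{-1}\bigl[A_{1t}+A_{2t}+A_{3t}+A_{4t}\bigr],$$
where, writing $\zeta_{st}=p^{-1}e_s^Te_t-\gamma_p(s,t)$, the four terms are $A_{1t}=n^{-1}\sum_s\hat{f}_s^k\,\gamma_p(s,t)$, $A_{2t}=n^{-1}\sum_s\hat{f}_s^k\,\zeta_{st}$, and $A_{3t},A_{4t}$ are the two cross terms that carry the factors $p^{-1}f_s^T\mathbf{\Lambda}^T e_t$ and $p^{-1}f_t^T\mathbf{\Lambda}^T e_s$ respectively. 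The problem then reduces to showing $n^{-1}\sum_{t=1}^n\|A_{jt}\|^2=O_p(1/\min(n,p))$ for each $j$.

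Each of these four bounds rests on the normalization $n^{-1}\sum_s\|\hat{f}_s^k\|^2=O_p(1)$, which I would derive from the boundedness of the top eigenvalue of $\mathbf{X}\mathbf{X}^T/(np)$ under B1--B3. Cauchy--Schwarz combined with B3(ii) handles $A_1$ at rate $1/n$; the uniform fourth-moment bound B3(v) handles $A_2$ at rate $1/p$; and the weak-dependence bound B4 handles $A_3,A_4$ at rate $1/p$. Invoking $p\geq O_p(n)$ from Assumption A2 collapses all four rates to $O_p(1/n)$, delivering the theorem.

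The hard part will be uniform control of $(\mathbf{D}^k)^{-1}$, since both the residual identity and the rank statement for $\mathbf{H}^k$ depend on the smallest nonzero diagonal entry of $\mathbf{D}^k$ being bounded away from zero. Establishing this requires a perturbation argument comparing the spectrum of $\mathbf{X}\mathbf{X}^T/(np)$ with that of $\mathbf{\Lambda}\mathbf{F}^T\mathbf{F}\mathbf{\Lambda}^T/(np)$, treating the noise contribution $\mathbf{E}\mathbf{E}^T/(np)$ as a small perturbation controlled by B3; this is the technical workhorse inherited from Bai and Ng. A secondary subtlety is that the crude bounds on $A_3,A_4$ naturally produce terms in $\|\hat{f}_s^k-(\mathbf{H}^k)^T f_s\|$ itself and thus threaten circularity, which is resolved by first bounding everything using only $\|\hat{f}_s^k\|$ via the eigenvalue normalization and then, if needed, substituting the resulting preliminary rate back into the expressions.
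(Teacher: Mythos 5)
Your sketch is correct and is precisely the argument the paper relies on: the paper does not prove this statement itself but imports it verbatim from Bai and Ng (2002), and your decomposition---the eigen-identity $\tfrac{1}{np}\mathbf{X}\mathbf{X}^T\hat{\mathbf{F}}^k=\hat{\mathbf{F}}^k\mathbf{D}^k$, the rotation $\mathbf{H}^k=(p^{-1}\mathbf{\Lambda}^T\mathbf{\Lambda})(n^{-1}\mathbf{F}^T\hat{\mathbf{F}}^k)(\mathbf{D}^k)^{-1}$, and the four-term residual bounded via B3(ii), B3(v) and B4 after controlling $(\mathbf{D}^k)^{-1}$---is exactly Bai and Ng's proof of their Theorem~1. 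Your observation that collapsing the rate from $O_p(1/\min(n,p))$ to $O_p(1/n)$ requires $p\geq O_p(n)$ from Assumption A2 is a fair point of care that the paper's statement (which lists only B1--B4) glosses over.
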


\begin{theorem}
	Under the Assumptions B1 to B4, $\lim_{n\rightarrow \infty}\mathbf{P}(\hat{k}=r)=1$.
\end{theorem}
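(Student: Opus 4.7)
The plan is to show that $\text{IC}(k) > \text{IC}(r)$ with probability tending to $1$ for every $k \in \{1,\ldots,k_{\max}\}\setminus\{r\}$; since $\hat k$ minimises $\text{IC}(\cdot)$ over a fixed finite range, this yields $P(\hat k = r)\to 1$. The natural decomposition is
\begin{equation*}
\text{IC}(k) - \text{IC}(r) = \log\!\left(\frac{V(k)}{V(r)}\right) + (k-r)\,g(n,p),
\end{equation*}
where $g(n,p) = \frac{n+p}{np}\log\frac{np}{n+p}$. Setting $C_{np}^2 := \min(n,p)$, two properties of $g$ drive the argument: $g(n,p)\to 0$, so any log-ratio bounded away from $0$ will dominate the penalty in the underfit regime; and $C_{np}^{2}\,g(n,p)\to\infty$, so the penalty will dominate a log-ratio of order $C_{np}^{-2}$ in the overfit regime. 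The proof then splits into the two cases $k<r$ and $r<k\leq k_{\max}$.

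For the underfit case $k<r$, I would use Assumptions B1 and B2 to show that $V(k)$ is asymptotically bounded below by a deterministic limit strictly exceeding that of $V(r)$. Concretely, fitting fewer than $r$ principal components must leave at least the $(k+1)$-th largest eigenvalue of $(np)^{-1}\mathbf{\Lambda}\mathbf{F}^T\mathbf{F}\mathbf{\Lambda}^T$ uncaptured, and by B1--B2 this quantity converges to a positive eigenvalue of $\mathbf{\Sigma}_\Lambda^{1/2}\mathbf{\Sigma}_F\mathbf{\Sigma}_\Lambda^{1/2}$. Adding the average idiosyncratic variance bounds $V(r)$ above by a finite limit, so $\liminf V(k)/V(r) > 1$; the log-ratio is therefore bounded away from zero and the vanishing $(k-r)g(n,p)$ penalty cannot compensate.

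For the overfit case $k>r$, I would invoke Theorem 1: when $k\geq r$, the span of $\hat{\mathbf{F}}^k$ contains a linear transformation of the true factors up to mean-squared error of order $n^{-1}$. The additional reduction $V(r)-V(k)$ then comes entirely from fitting the idiosyncratic component $\mathbf{E}$ by the extra $k-r$ directions, and can be bounded by the sum of the top $k-r$ eigenvalues of the corresponding residual Gram matrix. Using the moment and weak-dependence bounds in Assumptions B3 and B4, the operator norm of $\mathbf{E}\mathbf{E}^T/(np)$ is $O_p(C_{np}^{-2})$, giving $V(r)-V(k) = O_p(C_{np}^{-2})$ and hence $\log(V(k)/V(r)) = O_p(C_{np}^{-2})$. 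Since $(k-r)g(n,p)$ is strictly positive and $C_{np}^2 g(n,p)\to\infty$, the penalty dominates.

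The main obstacle is the overfit bound: controlling the top eigenvalues of a purely idiosyncratic noise covariance at the sharp rate $C_{np}^{-2}$, uniformly over $k \in \{r+1,\ldots,k_{\max}\}$. The finiteness of $k_{\max}$ reduces the uniformity issue to a trivial union bound, but the single-$k$ rate requires combining the eighth-moment bound, the cross-sectional dependence control of B3, and the weak dependence condition B4 in a careful truncation argument; this is where the argument becomes genuinely technical and where most of the underlying work in Bai and Ng (2002) is invested.
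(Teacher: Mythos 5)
This theorem is not proved in the paper at all: it is imported verbatim from Bai and Ng (2002) (``We present the following 2 theorems from Bai and Ng (2002) as our Theorems 1 and 2''), so there is no in-paper argument to compare yours against. What you have written is, in outline, a faithful reconstruction of the original Bai--Ng proof: the decomposition $\text{IC}(k)-\text{IC}(r)=\log(V(k)/V(r))+(k-r)g(n,p)$, the two conditions $g(n,p)\to 0$ and $C_{np}^{2}g(n,p)\to\infty$ with $C_{np}^{2}=\min(n,p)$, the underfit case resolved by showing $V(k)$ converges to a limit strictly above that of $V(r)$ (their Lemma 3, driven by B1--B2 and the positive eigenvalues of $\mathbf{\Sigma}_{\Lambda}^{1/2}\mathbf{\Sigma}_F\mathbf{\Sigma}_{\Lambda}^{1/2}$), and the overfit case resolved by $V(r)-V(k)=O_p(C_{np}^{-2})$ (their Lemma 4). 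So the route is the right one and essentially the only one available under these assumptions.

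Two caveats. First, your proposal is a roadmap rather than a proof: you explicitly defer the one genuinely hard step, the $O_p(C_{np}^{-2})$ bound in the overfit case, and the way you frame it (an operator-norm bound on $\mathbf{E}\mathbf{E}^T/(np)$) is stronger than what B3--B4 deliver directly; Bai and Ng deliberately avoid a full operator-norm bound and instead control the specific bilinear forms $V(k,\hat{\mathbf{F}}^k)-V(k,\mathbf{F}\mathbf{H}^k)$ using the trace-level and fourth-moment conditions in B3.2--B3.5 and B4. Under the weak cross-sectional and serial dependence permitted by B3, a sharp spectral-norm bound on $\mathbf{E}$ does not follow from eighth moments alone, so if you pursued your stated route literally you would get stuck; the fix is to bound the scalar quantity $V(r)-V(k)$ directly rather than via eigenvalues of the noise Gram matrix. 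Second, a minor point in the underfit case: the rank-$k$ fit is PCA on $\mathbf{X}$, not on the common component $\mathbf{F}\mathbf{\Lambda}^T$ alone, so the Eckart--Young argument needs the extra step showing the idiosyncratic contribution cannot close the eigenvalue gap; this is exactly what Bai--Ng's Lemma 3 supplies. Neither caveat changes the verdict that your strategy is the correct (and standard) one for this result.
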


\begin{algorithm}[h]
	\begin{algorithmic}[]
		
		\State \textbf{INPUT}: $\mathbf{X}\in \mathbb{R}^{n\times p}$, $\mathbf{Y} \in \mathbb{R}^n$, $J \subset \{1,\ldots,p\}$
		\State \textbf{Step 1}: Compute $\hat{\mathbf{F}}=\hat{\mathbf{F}}^{
			\hat{k}}$ by Algorithm 1 based on $\mathbf{X},\mathbf{Y}$.
		
		\State \textbf{Step 2}: Compute $\mathbf{\tilde{X}}_J=(\mathbf{I}-\hat{\mathbf{F}}(\hat{\mathbf{F}}^T\hat{\mathbf{F}})^{-1}\hat{\mathbf{F}}^T)\mathbf{X}_J$
		\State \textbf{Step 3}: $\tilde{\boldsymbol{\beta}}_J^0=(\tilde{\mathbf{X}}_J^T\tilde{\mathbf{X}}_J)^{-1}\tilde{\mathbf{X}}_J^T\mathbf{Y}$
		
		\State \textbf{OUTPUT}: $\tilde{\boldsymbol{\beta}}_J^0$.
	\end{algorithmic}
	\caption{ $\boldsymbol{\beta}_J$ estimation}
	\label{alg:algorithm 2}
\end{algorithm}

We present our algorithm to estimate $\boldsymbol{\beta}_J$ in \eqref{Decomposition of Y} in Algorithm 2. To show the consistency of $\tilde{\boldsymbol{\beta}}_J^0$, we also make the following assumptions on the selection set $J$.\\

\noindent
\textbf{Assumption C} (Selection set)
\begin{itemize}
	\item[C1.] $\frac{1}{n}\mathbf{E}_J^T\mathbf{E}_J\rightarrow \mathbf{G}_J$ as $n\rightarrow \infty$ for some positive definite matrix $\mathbf{G}_J$.
	\item[C2.] $E(\frac{1}{m}\sum_{j\in J}\norm{\frac{1}{\sqrt{n}}\sum_{t=1}^{n}f_t e_{tj}}^2)\leq M$
	\item[C3.] $E(\frac{1}{m}\sum_{j\in J}\frac{1}{\sqrt{n}}\sum_{t=1}^{n}\varepsilon_t e_{tj}^2)\leq M$
	\item[C4.] For $i \notin J$, $E(\frac{1}{m}\sum_{j\in J}(\frac{1}{\sqrt{n}}\sum_{t=1}^{n}e_{ti} e_{tj})^2)\leq M$
\end{itemize}
Assumption C1 is required for the inverse $(\tilde{\mathbf{X}}_J^T\tilde{\mathbf{X}}_J)^{-1}$ in Algorithm 2 to be reasonable. Assumption C2 to C4 are similar to B4 for the weak dependence between $f_t,\varepsilon_t,e_{ti}$ with $e_{tj}$ for $i\notin J$ and $j \in J$. Examples 1 and 2 justify our assumptions for martingale regression model \eqref{Linear Model}.

\paragraph{Example 1:} If $E(e_{ti}e_{tj})=G_{ij}$ and $e_{ti}e_{tj}$ are independent, then by central limit theorem, $\frac{1}{n}\sum_{t=1}^{n}e_{ti}e_{tj}-G_{ij}=O_p(\frac{1}{\sqrt{n}})$ if $\text{Var}(e_{ti}e_{tj})\leq M, \forall i,j \in J$. In such case, $\norm{\frac{1}{n}\mathbf{E}_J^T\mathbf{E}_J-\mathbf{G}_J}^2\leq O_p(\frac{m^2}{n})$ converges to 0 in probability be Assumption A2. If $\{e_{ti}e_{tj}-G_{ij}\}$ is a martingale sequence instead of independent, the argument still holds with martingale central limit theorem and uniformly bounded conditional variance; See Appendix A in Lai and Xing (2008).

\paragraph{Example 2:} If $\varepsilon_t$ follow GARCH(1,1) model in \eqref{Garch Model} with $E\sigma_t^4\leq M$, then $E(\frac{1}{\sqrt{n}}\sum_{t=1}^{n}\varepsilon_t e_{tj})^2=\frac{1}{n}E(\sum_{t=1}^{n}e_{tj}^2\sigma_t^2\xi_t^2+\sum_{s\neq t}e_{tj}\sigma_t e_{sj}\sigma_s \xi_t \xi_s)\leq \frac{1}{2n}\sum_{t=1}^{n}(Ee_{tj}^4+ E\sigma_t^4)\leq M$ by Assumption B3.1. This implies Assumption C3 holds for heteroskedastic $\varepsilon_t$. Similarly, Assumption C2 and C4 hold for heteroskedastic $e_{tj}$.\\

Under Assumptions A1-A3, B1-B4 and C1-C4, we have the following theorems.
\begin{theorem}
	$\frac{1}{n}\tilde{\mathbf{X}}_J^T\tilde{\mathbf{X}}_J\rightarrow \mathbf{G}_J$ in probability as $n\rightarrow \infty$
\end{theorem}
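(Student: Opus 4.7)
The plan is to approximate the estimated projection $\hat{\mathbf{P}}_F=\hat{\mathbf{F}}(\hat{\mathbf{F}}^T\hat{\mathbf{F}})^{-1}\hat{\mathbf{F}}^T$ by the infeasible oracle projection $\mathbf{P}_F=\mathbf{F}(\mathbf{F}^T\mathbf{F})^{-1}\mathbf{F}^T$ onto the true factor column space, so as to split the target as
\[
\tfrac{1}{n}\tilde{\mathbf{X}}_J^T\tilde{\mathbf{X}}_J
=\underbrace{\tfrac{1}{n}\mathbf{X}_J^T(\mathbf{I}-\mathbf{P}_F)\mathbf{X}_J}_{(\mathrm{I})}
+\underbrace{\tfrac{1}{n}\mathbf{X}_J^T(\mathbf{P}_F-\hat{\mathbf{P}}_F)\mathbf{X}_J}_{(\mathrm{II})}.
\]
I would then show $(\mathrm{I})$ converges to $\mathbf{G}_J$ and $(\mathrm{II})$ is $o_p(1)$, both in Frobenius norm, the mode of convergence implicit in Assumption C1 and Example 1.

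For $(\mathrm{I})$, the crucial algebraic identity is $(\mathbf{I}-\mathbf{P}_F)\mathbf{X}_J=(\mathbf{I}-\mathbf{P}_F)\mathbf{E}_J$, which follows from $\mathbf{X}_J=\mathbf{F}\mathbf{\Lambda}_J^T+\mathbf{E}_J$ and $(\mathbf{I}-\mathbf{P}_F)\mathbf{F}=\mathbf{0}$. Hence
\[
(\mathrm{I})=\tfrac{1}{n}\mathbf{E}_J^T\mathbf{E}_J-\tfrac{1}{n}\bigl(\mathbf{E}_J^T\mathbf{F}\bigr)(\mathbf{F}^T\mathbf{F})^{-1}\bigl(\mathbf{F}^T\mathbf{E}_J\bigr).
\]
The first summand tends to $\mathbf{G}_J$ by Assumption C1. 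For the second, set $A_n=(n^{-1}\mathbf{F}^T\mathbf{F})^{-1}\to\mathbf{\Sigma}_F^{-1}$ (Assumption B1) and $B_n=n^{-1/2}\mathbf{F}^T\mathbf{E}_J$, for which Assumption C2 gives $\|B_n\|_F^2=\sum_{j\in J}\|n^{-1/2}\sum_t f_t e_{tj}\|^2=O_p(m)$. The summand equals $n^{-1}B_n^TA_nB_n$, whose Frobenius norm is bounded by $n^{-1}\|A_n\|_{op}\|B_n\|_F^2=O_p(m/n)=o_p(1)$ by Assumption A2.

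For $(\mathrm{II})$, Theorem 2 lets me work on the event $\{\hat{k}=r\}$, whose probability tends to one. Theorem 1 then supplies an invertible $r\times r$ rotation $\mathbf{H}$ with $\hat{\mathbf{F}}=\mathbf{F}\mathbf{H}+\hat{\mathbf{U}}$ and $\|\hat{\mathbf{U}}\|_F^2=O_p(1)$. Since $\mathbf{P}_{\mathbf{F}\mathbf{H}}=\mathbf{P}_F$, I expand both projections in the three-factor form
\[
\tfrac{1}{n}\mathbf{X}_J^T\hat{\mathbf{P}}_F\mathbf{X}_J=\bigl(\tfrac{1}{n}\mathbf{X}_J^T\hat{\mathbf{F}}\bigr)\bigl(\tfrac{1}{n}\hat{\mathbf{F}}^T\hat{\mathbf{F}}\bigr)^{-1}\bigl(\tfrac{1}{n}\hat{\mathbf{F}}^T\mathbf{X}_J\bigr),
\]
and analogously for $\mathbf{P}_{\mathbf{F}\mathbf{H}}$, then substitute $\hat{\mathbf{F}}=\mathbf{F}\mathbf{H}+\hat{\mathbf{U}}$. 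Using $\|\mathbf{X}_J\|_F^2=O_p(nm)$ (from the factor decomposition together with Assumptions B and C1), $\|\mathbf{F}\|_F^2=O(n)$, and $\|\hat{\mathbf{U}}\|_F^2=O_p(1)$, every cross term of the form $n^{-1}\mathbf{X}_J^T\hat{\mathbf{U}}$, $n^{-1}\mathbf{F}^T\hat{\mathbf{U}}$, or $n^{-1}\hat{\mathbf{U}}^T\hat{\mathbf{U}}$ has Frobenius norm $O_p(\sqrt{m/n})$ or smaller. The dominant surviving piece equals $\mathbf{\Lambda}_J\mathbf{\Sigma}_F\mathbf{H}(\mathbf{H}^T\mathbf{\Sigma}_F\mathbf{H})^{-1}\mathbf{H}^T\mathbf{\Sigma}_F\mathbf{\Lambda}_J^T=\mathbf{\Lambda}_J\mathbf{\Sigma}_F\mathbf{\Lambda}_J^T$ (using invertibility of $\mathbf{H}$), which, once recombined with $n^{-1}\mathbf{X}_J^T\mathbf{X}_J$ in $(\mathrm{I})$, cancels the $\mathbf{\Lambda}_J\mathbf{\Sigma}_F\mathbf{\Lambda}_J^T$ component and leaves $\mathbf{G}_J$.

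The hard part will be propagating the $\hat{\mathbf{U}}$-perturbation through the three-factor product without inflating beyond $o_p(1)$ in Frobenius norm, since the target is an $m\times m$ matrix with $m\to\infty$. The outer factors have operator norm $O_p(\sqrt{m})$ (because $\|\mathbf{\Lambda}_J\|_{op}\leq\sqrt{m}\,\bar{\lambda}$ by Assumption B2), so each $O_p(\sqrt{m/n})$ block-level error, once multiplied by its two companions, produces a contribution of order $O_p(m/\sqrt{n})$; Assumption A2 is exactly sharp enough to push this to $o_p(1)$. A secondary technical point is verifying that the rotation matrix $\mathbf{H}$ from Theorem 1 has uniformly bounded inverse, which follows from its explicit construction in Bai and Ng (2002) together with Assumptions B1 and B2.
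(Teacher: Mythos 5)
Your proposal is correct and, at its core, uses the same ingredients as the paper's proof: the Bai--Ng rate for $\hat{\mathbf{F}}-\mathbf{F}\mathbf{H}$ (the paper packages this as Lemma 2, $\norm{\mathbf{P}_{\hat{F}}-\mathbf{P}_{FH}}=O_p(n^{-1/2})$), exact annihilation of the common-factor component by the oracle projection, Assumption C2 to control $\mathbf{F}^T\mathbf{E}_J$, and Assumption C1 to identify the limit $\mathbf{G}_J$. The organization differs: the paper never introduces your two-step split $(\mathrm{I})+(\mathrm{II})$; it expands $\tilde{\mathbf{X}}_J=\mathbf{P}_{\hat F}^{\bot}(\mathbf{F}\lambda_J+\mathbf{E}_J)$ directly and bounds $\norm{\tfrac{1}{n}\tilde{\mathbf{X}}_J^T\tilde{\mathbf{X}}_J-\tfrac{1}{n}\mathbf{E}_J^T\mathbf{E}_J}$ by the three quantities $\tfrac{1}{n}\norm{\mathbf{P}_{\hat F}^{\bot}\mathbf{F}\lambda_J}^2$, $\tfrac{2}{n}\norm{\mathbf{P}_{\hat F}^{\bot}\mathbf{E}_J}\,\norm{\mathbf{P}_{\hat F}^{\bot}\mathbf{F}\lambda_J}$ and $\tfrac{1}{n}\norm{\mathbf{P}_{\hat F}\mathbf{E}_J}^2$, each handled by the same Lemma-2 plus C2 estimates you invoke. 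This buys a sharper rate, $O_p(\sqrt{m/n})$, whereas your factor-by-factor differencing of the two three-term projection products only yields $O_p(m/\sqrt{n})$ for $(\mathrm{II})$ --- still $o_p(1)$ under A2, so nothing is lost for the theorem itself, but your route carries noticeably more bookkeeping. One passage you should repair before writing this up: in $(\mathrm{II})$ you say the dominant piece $\mathbf{\Lambda}_J\mathbf{\Sigma}_F\mathbf{\Lambda}_J^T$ ``cancels against $n^{-1}\mathbf{X}_J^T\mathbf{X}_J$ in $(\mathrm{I})$,'' but under your own decomposition $(\mathrm{I})$ has already been reduced to $\tfrac{1}{n}\mathbf{E}_J^T(\mathbf{I}-\mathbf{P}_F)\mathbf{E}_J$ and contains no such term; the cancellation of $\mathbf{\Lambda}_J\mathbf{\Sigma}_F\mathbf{\Lambda}_J^T$ must happen entirely \emph{within} $(\mathrm{II})$, between the $\hat{\mathbf{P}}_F$ and $\mathbf{P}_F$ products --- which your factor-by-factor perturbation argument does deliver, so this is a slip of exposition rather than of substance.
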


\begin{theorem}
	Given $\hat{k}=r$,$\norm{\tilde{\boldsymbol{\beta}}_J^0-\boldsymbol{\beta}_J}\leq O_p(\sqrt{\frac{m}{n}})$	
\end{theorem}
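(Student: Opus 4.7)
The plan is to start from the closed-form residual
\begin{equation*}
\tilde{\boldsymbol{\beta}}_J^0 - \boldsymbol{\beta}_J = (\hat{\tilde{\mathbf{X}}}_J^T \hat{\tilde{\mathbf{X}}}_J)^{-1} \hat{\tilde{\mathbf{X}}}_J^T \mathbf{w}_J,
\end{equation*}
which follows from substituting $\mathbf{Y}=\mathbf{X}_J\boldsymbol{\beta}_J+\mathbf{w}_J$ into the Algorithm 2 estimator and invoking the idempotence $(\mathbf{I}-\hat{\mathbf{P}}_F)^2=\mathbf{I}-\hat{\mathbf{P}}_F$, which forces $\hat{\tilde{\mathbf{X}}}_J^T\mathbf{X}_J=\hat{\tilde{\mathbf{X}}}_J^T\hat{\tilde{\mathbf{X}}}_J$. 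The target rate then splits into two tasks: (i) a lower bound on the smallest eigenvalue of $\tfrac{1}{n}\hat{\tilde{\mathbf{X}}}_J^T\hat{\tilde{\mathbf{X}}}_J$ that keeps the inverse bounded in operator norm, and (ii) the upper bound $\norm{\hat{\tilde{\mathbf{X}}}_J^T\mathbf{w}_J}=O_p(\sqrt{nm})$.

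For (i), Theorem 3 already gives $\tfrac{1}{n}\tilde{\mathbf{X}}_J^T\tilde{\mathbf{X}}_J\to\mathbf{G}_J$ in probability, so it suffices to show that swapping $\mathbf{P}_F$ for $\hat{\mathbf{P}}_F$ perturbs this limit by $o_p(1)$. Because $\hat k=r$ is granted, Theorem 1 supplies a full-rank $\mathbf{H}\in\mathbb{R}^{r\times r}$ with $\tfrac{1}{n}\sum_t\norm{\hat f_t-\mathbf{H}^Tf_t}^2=O_p(1/n)$; after verifying $\norm{\mathbf{H}^{-1}}=O_p(1)$ via Assumption B1, this upgrades to $\tfrac{1}{n}\norm{\hat{\mathbf{F}}-\mathbf{F}\mathbf{H}}^2=O_p(1/n)$ and, by a standard projector-perturbation argument, to $\norm{\hat{\mathbf{P}}_F-\mathbf{P}_F}_{\mathrm{op}}=O_p(1/\sqrt{n})$, since $\hat{\mathbf{F}}$ and $\mathbf{F}\mathbf{H}$ span the same limiting $r$-dimensional column space. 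Combined with the a priori bound $\tfrac{1}{n}\norm{\mathbf{X}_J}_{\mathrm{op}}^2=O_p(m)$ from B2 and C1, the perturbation to the Gram matrix is of size $O_p(m)\cdot O_p(1/\sqrt{n})=O_p(m/\sqrt{n})$, which is $o_p(1)$ by Assumption A2. Hence $\tfrac{1}{n}\hat{\tilde{\mathbf{X}}}_J^T\hat{\tilde{\mathbf{X}}}_J\to\mathbf{G}_J$ in probability as well and $\norm{(\tfrac{1}{n}\hat{\tilde{\mathbf{X}}}_J^T\hat{\tilde{\mathbf{X}}}_J)^{-1}}_{\mathrm{op}}=O_p(1)$.

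For (ii), I would substitute $\mathbf{X}_J=\mathbf{F}\mathbf{\Lambda}_J^T+\mathbf{E}_J$ and $\mathbf{w}_J=\mathbf{F}\mathbf{\Lambda}_{J^c}^T\boldsymbol{\beta}_{J^c}+\mathbf{E}_{J^c}\boldsymbol{\beta}_{J^c}+\boldsymbol{\varepsilon}$ and split $(\mathbf{I}-\hat{\mathbf{P}}_F)=(\mathbf{I}-\mathbf{P}_F)+(\mathbf{P}_F-\hat{\mathbf{P}}_F)$. Against the true projector $\mathbf{I}-\mathbf{P}_F$ the factor pieces vanish because $(\mathbf{I}-\mathbf{P}_F)\mathbf{F}=0$, leaving three dominant terms: $\norm{\mathbf{E}_J^T\boldsymbol{\varepsilon}}=O_p(\sqrt{nm})$ from Assumption C3, $\norm{\mathbf{E}_J^T\mathbf{F}\mathbf{\Lambda}_{J^c}^T\boldsymbol{\beta}_{J^c}}=O_p(\sqrt{nm})$ from C2 together with the constant bound $\norm{\mathbf{\Lambda}_{J^c}^T\boldsymbol{\beta}_{J^c}}\le\bar\lambda M$ supplied by B2 and A3, and $\norm{\sum_{i\in J^c}\beta_i\mathbf{E}_J^T\mathbf{E}_i}=O_p(\sqrt{nm})$ obtained by Markov's inequality applied to $\sum_i|\beta_i|\sqrt{E\norm{\mathbf{E}_J^T\mathbf{E}_i}^2}\le M\sqrt{nmM}$ using C4 and A3. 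The perturbation contribution $\mathbf{X}_J^T(\mathbf{P}_F-\hat{\mathbf{P}}_F)\mathbf{w}_J$ is bounded by $\norm{\mathbf{X}_J}_{\mathrm{op}}\cdot\norm{\mathbf{P}_F-\hat{\mathbf{P}}_F}_{\mathrm{op}}\cdot\norm{\mathbf{w}_J}=O_p(\sqrt{nm})\cdot O_p(1/\sqrt{n})\cdot O_p(\sqrt{n})=O_p(\sqrt{nm})$, and is therefore absorbed as well.

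Combining (i) and (ii) produces $\norm{\tilde{\boldsymbol{\beta}}_J^0-\boldsymbol{\beta}_J}\le\norm{(\tfrac{1}{n}\hat{\tilde{\mathbf{X}}}_J^T\hat{\tilde{\mathbf{X}}}_J)^{-1}}_{\mathrm{op}}\cdot\tfrac{1}{n}\norm{\hat{\tilde{\mathbf{X}}}_J^T\mathbf{w}_J}=O_p(1)\cdot O_p(\sqrt{m/n})$, the claimed rate. The main obstacle lies in step (i): converting the per-$t$ $O_p(1/n)$ bound on $\hat f_t-\mathbf{H}^Tf_t$ from Theorem 1 into a uniform operator-norm bound on $\hat{\mathbf{P}}_F-\mathbf{P}_F$ that is small relative to the growing dimension $m$, so that the cross terms collected in (ii) really do stay at $O_p(\sqrt{nm})$ rather than inflating the final bound. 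Assumption A2 ($m^2/n\to 0$) is essential here, as several of those perturbation terms pick up an additional factor of $m$ and would otherwise overwhelm the target $\sqrt{m/n}$ rate.
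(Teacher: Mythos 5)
Your proposal is correct and takes essentially the same route as the paper: the same closed form $(\tilde{\mathbf{X}}_J^T\tilde{\mathbf{X}}_J)^{-1}\tilde{\mathbf{X}}_J^T\mathbf{w}_J$, the bounded inverse supplied by Theorem 3, a projector-perturbation bound $\norm{\mathbf{P}_{\hat{F}}-\mathbf{P}_{F}}=O_p(n^{-1/2})$ (which is exactly the paper's Lemma 2), and $O_p(\sqrt{nm})$ cross-term bounds from Assumptions C2--C4, A3 and B2 via Markov's inequality (the paper's Lemma 3). The only cosmetic difference is that you split $\mathbf{I}-\mathbf{P}_{\hat{F}}$ into the true annihilator plus a perturbation so the factor pieces vanish identically, whereas the paper works with $\mathbf{P}_{\hat{F}}^{\bot}$ directly and sends the term $\norm{\mathbf{P}_{\hat{F}}^{\bot}\mathbf{F}\boldsymbol{\lambda}_{J^c}\boldsymbol{\beta}_{J^c}}$ to $O_p(1)$ through Lemma 2.
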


\begin{lemma}
	Under the assumptions for Theorems 2 and 4, $\tilde{\mathbf{w}}_J=\mathbf{Y}-\mathbf{X}_J\tilde{\boldsymbol{\beta}}_J^0$ converges to $\mathbf{w}_J=\mathbf{Y}-\mathbf{X}_J\boldsymbol{\beta}_J$ in distribution.
\end{lemma}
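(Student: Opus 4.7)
The plan is to reduce the claim to showing that the discrepancy $\tilde{\mathbf{w}}_J-\mathbf{w}_J$ vanishes coordinatewise in probability, and then appeal to Slutsky's theorem to conclude that $\tilde{\mathbf{w}}_J$ inherits the limiting distribution of $\mathbf{w}_J$. First I would observe the simple algebraic identity
$$\tilde{\mathbf{w}}_J-\mathbf{w}_J=(\mathbf{Y}-\mathbf{X}_J\tilde{\boldsymbol{\beta}}_J^0)-(\mathbf{Y}-\mathbf{X}_J\boldsymbol{\beta}_J)=\mathbf{X}_J(\boldsymbol{\beta}_J-\tilde{\boldsymbol{\beta}}_J^0),$$
so that for each $t$ one has $\tilde{w}_{Jt}-w_{Jt}=\mathbf{x}_{tJ}^T(\boldsymbol{\beta}_J-\tilde{\boldsymbol{\beta}}_J^0)$, and by Cauchy--Schwarz
$$|\tilde{w}_{Jt}-w_{Jt}|\leq \norm{\mathbf{x}_{tJ}}\,\norm{\boldsymbol{\beta}_J-\tilde{\boldsymbol{\beta}}_J^0}.$$

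Next I would control each factor separately. Theorem 2 gives $\mathbf{P}(\hat{k}=r)\to 1$, so I would carry out all subsequent estimates on the event $\{\hat{k}=r\}$; Theorem 4 then provides $\norm{\boldsymbol{\beta}_J-\tilde{\boldsymbol{\beta}}_J^0}=O_p(\sqrt{m/n})$ on that event. For the design factor, the decomposition $x_{tj}=\lambda_j^Tf_t+e_{tj}$ of Assumption A1, together with $E\norm{f_t}^4<\infty$ (B1), $\norm{\lambda_j}\leq \bar\lambda$ (B2), and $E(e_{tj}^2)\leq M^{1/4}$ from B3.1, yields
$$E\norm{\mathbf{x}_{tJ}}^2=\sum_{j\in J}E(x_{tj}^2)\leq 2\sum_{j\in J}\bigl(\bar\lambda^2\,E\norm{f_t}^2+E(e_{tj}^2)\bigr)=O(m),$$
so $\norm{\mathbf{x}_{tJ}}=O_p(\sqrt{m})$. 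Combining,
$$|\tilde{w}_{Jt}-w_{Jt}|=O_p(\sqrt{m})\cdot O_p(\sqrt{m/n})=O_p(m/\sqrt{n}),$$
which tends to zero by Assumption A2 ($m^2/n\to 0$). Since $\mathbf{P}(\hat{k}\neq r)=o(1)$, the bound is unconditional up to an $o(1)$ term, so $\tilde{w}_{Jt}-w_{Jt}\to 0$ in probability.

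Finally, Slutsky's theorem gives that $\tilde{w}_{Jt}$ has the same limiting distribution as $w_{Jt}$ for every $t$; to promote this to the joint (finite-dimensional) convergence, I would apply the Cram\'er--Wold device to an arbitrary linear combination $\mathbf{a}^T(\tilde{\mathbf{w}}_J-\mathbf{w}_J)=\mathbf{a}^T\mathbf{X}_J(\boldsymbol{\beta}_J-\tilde{\boldsymbol{\beta}}_J^0)$, where the same Cauchy--Schwarz argument, with $\norm{\mathbf{X}_J^T\mathbf{a}}$ in place of $\norm{\mathbf{x}_{tJ}}$, yields the identical $O_p(m/\sqrt{n})$ bound.

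I expect the main technical nuisance, rather than a deep obstacle, to be bookkeeping around the conditional statement of Theorem 4: everything must be argued on $\{\hat{k}=r\}$ and then combined with Theorem 2. The only genuine inequality invoked beyond what is already established is the $O(m)$ bound on $E\norm{\mathbf{x}_{tJ}}^2$, which is an immediate consequence of Assumption B; the rest is essentially a clean application of Slutsky's theorem to the identity $\tilde{\mathbf{w}}_J-\mathbf{w}_J=\mathbf{X}_J(\boldsymbol{\beta}_J-\tilde{\boldsymbol{\beta}}_J^0)$.
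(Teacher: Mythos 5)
Your argument is correct, and it takes the only natural route: the paper itself states this lemma without an explicit proof in the appendix, treating it as an immediate consequence of Theorems 2 and 4, and your write-up is a faithful filling-in of exactly that implication via the identity $\tilde{\mathbf{w}}_J-\mathbf{w}_J=\mathbf{X}_J(\boldsymbol{\beta}_J-\tilde{\boldsymbol{\beta}}_J^0)$, the $O_p(\sqrt{m/n})$ rate, the $O(m)$ bound on $E\norm{\mathbf{x}_{tJ}}^2$ from Assumptions A1, B1--B3, and Assumption A2. Your handling of the conditioning on $\{\hat{k}=r\}$ and the coordinatewise/Cram\'er--Wold reading of the (informally stated) distributional convergence is appropriate and adds rigor the paper does not supply.
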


\subsection{$\mathbf{w}^{(b)}$ generation}
The results in Section 2.1 hold under the assumption that $J$ is independent of $\mathbf{X}$ and $\mathbf{Y}$. In order to handle the situation that $\hat{J}$ is selected by applying OGA on $\mathbf{X}$ and $\mathbf{Y}$, we divide the observations $\{\mathbf{x}_t,y_t\},t=1,\ldots,n$ into a training set $S^{\text{train}}=\{(\mathbf{x}_t,y_t):t=1,\ldots,[\frac{n}{2}]\}$ and a test set $S^{\text{test}}=\{(\mathbf{x}_t,y_t):t=[\frac{n}{2}]+1,\ldots,n\}$, where $[x]$ denotes the integer part of $x$. We apply OGA on $S^{\text{train}}$ to select $\hat{J}^{\text{train}}$, and then apply Algorithm 2 on $S^{\text{test}}$ and $\hat{J}^{\text{train}}$ to get $\tilde{\boldsymbol{\beta}}_{\hat{J}}^{\text{test}}$. If $S^{\text{train}}$ and $S^{\text{test}}$ are independent, $\tilde{\boldsymbol{\beta}}_{\hat{J}}^{\text{test}}$ is a consistent estimator of $\beta_j, j \in \hat{J}^{\text{train}}$ by the results in Section 2.1. Similarly, by exchanging the role of $S^{\text{train}}$ and $S^{\text{test}}$, we can get a consistent estimate $\tilde{\boldsymbol{\beta}}_{\hat{J}}^{\text{train}}$ for $\beta_j, j \in \hat{J}^{\text{test}}$, which is selected by applying OGA on $S^{\text{test}}$. Theorem 3 in Ing and Lai (2011) shows that, if all relevant variables satisfy $\beta_j^2\text{Var}(x_{tj})\gg n^{-\gamma}$ with $0\ll \gamma <1$ such that  $n^{2\gamma-1}\log p\rightarrow 0$, then $\hat{J}$ contains all relevant variables with probability approaching 1 as $n\rightarrow \infty$. This suggests that $\hat{J}^{\text{train}},\hat{J}^{\text{test}}$ and $\hat{J}$ select the 
same set of strong signals with high probability as $n\rightarrow 
\infty$. Hence we define an estimate $\tilde{\boldsymbol{\beta}}_{\hat{J}}$ for $\boldsymbol{\beta}_{\hat{J}}$ as follows: For $j \in \hat{J}$, if $j \in {\hat{J}}^{\text{train}}\cup \hat{J}^{\text{test}}$, let $\tilde{\boldsymbol{\beta}}_{\hat{J},j}$ be the corresponding estimate in $\hat{J}^{\text{train}}\cup \hat{J}^{\text{test}}$, i.e $\tilde{\boldsymbol{\beta}}_{\hat{J},j}^{\text{train}}$ or $\tilde{\boldsymbol{\beta}}_{\hat{J},j}^{\text{test}}$ or $(\tilde{\boldsymbol{\beta}}_{\hat{J},j}^{\text{train}}+\tilde{\boldsymbol{\beta}}_{\hat{J},j}^{\text{test}})/2$; if $j\notin \hat{J}^{\text{train}}\cup \hat{J}^{\text{test}}$, set $\tilde{\boldsymbol{\beta}}_{\hat{J},j}=0$. Let $\hat{J}_+=\{j\in \hat{J}:|\tilde{\boldsymbol{\beta}}_{\hat{J},j}|>0\}$ and consider the residuals $\tilde{\mathbf{w}}_{\hat{J}}=\mathbf{Y}-\mathbf{X}_{\hat{J}}\tilde{\boldsymbol{\beta}}_{\hat{J}}$. The standard bootstrapping residuals approach (see Section 1.6.2 of Lai and Xing (2008)), which resamples on $\tilde{\mathbf{w}}_{\hat{J}}$ to generate $\tilde{\mathbf{w}}_{\hat{J}}^{(b)}, b=1,\ldots,B$, may not be appropriate to be applied to generate $\mathbf{Y}^{(b)}=\mathbf{X}_{\hat{J}}\tilde{\boldsymbol{\beta}}_{\hat{J}}+\tilde{\mathbf{w}}_{\hat{J}}^{(b)}$ for two major issues. First, it ignores the potential correlation between $\mathbf{X}_{\hat{J}}$ and the residual $\mathbf{w}_{\hat{J}}=\mathbf{Y}-\mathbf{X}_{\hat{J}}\boldsymbol{\beta}_{\hat{J}}$. Such correlation may not be weak if $\mathbf{X}_j$ are correlated and there are some $j \notin \hat{J}$ with $|\beta_j|>0$ . Second, $\mathbf{Y}^{(b)}$ is independent of $\mathbf{X}_j$ for $j \notin \hat{J}$ but $\mathbf{Y}$ can have significant correlation with $\mathbf{X}_j$ if $|\beta_j|>0$. Note that $\tilde{\mathbf{w}}_{\hat{J}}$ is approximately equal to $\mathbf{Y}-\mathbf{X}_{\hat{J}_+}\boldsymbol{\beta}_{\hat{J}_+}= \sum_{j \notin \hat{J}_+}\beta_j\mathbf{X}_j+\boldsymbol{\varepsilon}$ as $\tilde{\boldsymbol{\beta}}_{\hat{J}_+}$ converges to $\boldsymbol{\beta}_{\hat{J}_+}$ asymptotically by Theorem 4, and both issues come from the case that there are significantly non-zero $\beta_j$ for $j \notin J_+$. To alleviate both issues, we want to get a good estimate for $\mathbf{X}_{\hat{J}_+^c}\boldsymbol{\beta}_{\hat{J}_+^c}=\sum_{j \notin \hat{J}_+}\beta_j\mathbf{X}_j$. However, since those variables are not selected at the first place by OGA, applying OGA on $\tilde{\mathbf{w}}_{\hat{J}}$ and $\mathbf{X}_{\hat{J}_+^c}$ directly is usually inefficient. By the assumptions that $\mathbf{X}_{\hat{J}_+^c}=\mathbf{F}\mathbf{\Lambda}_{\hat{J}_+^c}+\mathbf{E}_{\hat{J}_+^c}$ and the factor matrix $\mathbf{F}$ is weakly dependent with $\mathbf{E}_{\hat{J}_+^c}$ and $\boldsymbol{\varepsilon}$, we decompose $\mathbf{X}_{\hat{J}_+^c}\boldsymbol{\beta}_{\hat{J}_+^c}$ into $\mathbf{F}\mathbf{\Lambda}_{\hat{J}_+^c}\boldsymbol{\beta}_{\hat{J}_+^c}+\mathbf{E}_{\hat{J}_+^c}\boldsymbol{\beta}_{\hat{J}_+^c}$ and the part $\mathbf{E}_{\hat{J}_+^c}\boldsymbol{\beta}_{\hat{J}_+^c}$ can be approximated by a linear combination of the columns of $(\mathbf{I}-\mathbf{P}_{
	\hat{F}})\mathbf{X}_{\hat{J}_+^c}$, where $\hat{\mathbf{F}}$ is an estimate of $\mathbf{F}$ computed in Algorithm 1, due to the weak dependence assumption between $\mathbf{F}$ and $\mathbf{E}$. We then apply OGA on $\tilde{\mathbf{w}}_{\hat{J}}$ and $[\hat{\mathbf{F}},(\mathbf{I}-\mathbf{P}_{
	\hat{F}})\mathbf{X}_{\hat{J}_+^c}]$ to get an estimate $\hat{\boldsymbol\varepsilon}$ of $\boldsymbol\varepsilon$, and resample on $\hat{\boldsymbol\varepsilon}$ to generate $\mathbf{w}^{(b)}=\tilde{\mathbf{w}}_{\hat{J}}-\hat{\boldsymbol{\varepsilon}}+\hat{\boldsymbol{\varepsilon}}^{(b)}, b=1,\ldots B$. Since $\varepsilon_t$ in the martingale regression model \eqref{Linear Model} can be dependent, we apply the double block bootstrap method in Lee and Lai (2009) to handle the dependent data. We present the double block bootstrap method in Algorithm 3 and the summary of our procedure for $\mathbf{w}^{(b)}$ generation in Algorithm 4.

\begin{algorithm}[!h]
	\begin{algorithmic}[]		
		\State \textbf{INPUT}: $\boldsymbol{\varepsilon}=(\varepsilon_1,\ldots,\varepsilon_n)\in \mathbb{R}^n$
		\State \textbf{Step 1}: Resampling the first-level block bootstrap series $\mathcal{X}^*=(\varepsilon^*_1,\ldots,\varepsilon_n^*)$
		\begin{enumerate}
			\item[1.1] Let blocks $B_{j,l}=(\varepsilon_j,\varepsilon_{j+1},\ldots,\varepsilon_{j+l-1}), j=1,\ldots,n'$ be the overlapping blocks where $l=[n^{1/3}]$, $n'=n+l-1$.
			\item[1.2] Sampling $a=[n/l]$ blocks randomly with replacement from $\{B_{j,l},j=1,\ldots,n'\}$ and pasting them end to end to get $\mathcal{X}^*$.
		\end{enumerate}
		\State \textbf{Step 2}: Resampling the second-level block bootstrap series $\mathcal{X}^{**}$.
		\begin{enumerate}
			\item[2.1] Let blocks $B_{i,j,k}^*=(\varepsilon^*_{(i-1)l+j},\varepsilon^*_{(i-1)l+j+1},\ldots,\varepsilon^*_{(i-1)l+j+k-1})$ be the overlapping blocks within the block $(\varepsilon_{(i-1)l+1},\ldots,\varepsilon_{il})$, where $k=[l/2]$, $i=1,\ldots,a$, $j=1,\ldots,l'$, $l'=l-k+1$.
			\item[2.2] Sampling $c=[n/k]$ blocks with replacement from $\{B_{i,j,k}^*,i=1,\ldots,a,j=1,\ldots,l'\}$ and pasting them end to end to get $\mathcal{X}^{**}$.
		\end{enumerate}
	\State \textbf{OUTPUT}: $\boldsymbol{\varepsilon}^{(b)}=\mathcal{X}^{**}$.
	\end{algorithmic}
\caption{ $\boldsymbol{\varepsilon}^{(b)}$ generation }
\end{algorithm}

\begin{algorithm}[!h]
	\begin{algorithmic}[]
		
		\State \textbf{INPUT}: $\mathbf{X}\in \mathbb{R}^{n\times p}$, $\mathbf{Y} \in \mathbb{R}^n$
		\State \textbf{Step 1}: Apply OGA and Algorithm 1 on $(\mathbf{X},\mathbf{Y})=\{(\mathbf{x}_t,y_t), t=1,\ldots\}$ to select $\hat{J}$ and compute $\hat{\mathbf{F}}$.
		
		\State \textbf{Step 2}: Compute $\tilde{\boldsymbol{\beta}}_{\hat{J}}$
		\begin{enumerate}
			\item [2.1] Divide $(\mathbf{X},\mathbf{Y})$ into $(\mathbf{X}^{\text{train}},\mathbf{Y}^{\text{train}})=\{(\mathbf{x}_t,y_t):t=1,\ldots,[\frac{n}{2}]\}$ and $(\mathbf{X}^{\text{test}},\mathbf{Y}^{\text{test}})=\{(\mathbf{x}_t,y_t):t=[\frac{n}{2}]+1,\ldots,n\}$. Apply OGA on $(\mathbf{X}^{\text{train}},\mathbf{Y}^{\text{train}})$ and $(\mathbf{X}^{\text{test}},\mathbf{Y}^{\text{test}})$ to select $\hat{J}^{\text{train}}$ and $\hat{J}^{\text{test}}$.
			\item [2.2] Apply Algorithm 2 on $(\mathbf{X}^{\text{train}},\mathbf{Y}^{\text{train}},\hat{J}^{\text{train}})$ and $(\mathbf{X}^{\text{test}},\mathbf{Y}^{\text{test}},\hat{J}^{\text{test}})$ to compute $\tilde{\boldsymbol{\beta}}^{\text{train}}_{\hat{J}}$ and $\tilde{\boldsymbol{\beta}}^{\text{test}}_{\hat{J}}$ for each $j \in \hat{J}$, and set
			\begin{equation}
			\tilde{\boldsymbol{\beta}}_{\hat{J},j}=
			\begin{cases}
			0 , & \text{if}\ j \notin \hat{J}^{\text{train}}\cup\hat{J}^{\text{test}}\\
			(\tilde{\boldsymbol{\beta}}_{\hat{J},j}^{\text{train}}+\tilde{\boldsymbol{\beta}}_{\hat{J},j}^{\text{test}})/2, & \text{if}\   j \in \hat{J}^{\text{train}}\cap\hat{J}^{\text{test}}\\
			\tilde{\boldsymbol{\beta}}_{\hat{J},j}^{\text{test}}, & \text{if}\ j \in \hat{J}^{\text{train}} \setminus \hat{J}^{\text{test}}\\
			\tilde{\boldsymbol{\beta}}_{\hat{J},j}^{\text{train}}, & \text{if}\ j \in \hat{J}^{\text{test}}\setminus \hat{J}^{\text{train}}
			\end{cases}
			\end{equation}		       	
			
		\end{enumerate}
		\State \textbf{Step 3}: Estimate $\boldsymbol{\varepsilon}$.
		\begin{enumerate}
			\item[3.1] Compute $\tilde{\mathbf{w}}_{\hat{J}}=\mathbf{Y}-\mathbf{X}_{\hat{J}}\tilde{\boldsymbol{\beta}}_{\hat{J}}$, and $\tilde{\mathbf{X}}^F=[\hat{\mathbf{F}},(\mathbf{I}-\mathbf{P}_{\hat{F}})\mathbf{X}_{\hat{J}_+^c}]$, where $\hat{J}_+=\{j\in \hat{J}:|\tilde{\boldsymbol{\beta}}_{\hat{J},j}|>0\}$. 
			Divide $(\tilde{\mathbf{X}}^F,\tilde{\mathbf{w}}_{\hat{J}})$ into $(\tilde{\mathbf{X}}^{F,\text{train}},\tilde{\mathbf{w}}^{\text{train}}_{\hat{J}})$ and $(\tilde{\mathbf{X}}^{F,\text{test}},\tilde{\mathbf{w}}^{\text{test}}_{\hat{J}})$ as in Step 2.1. Apply OGA on $(\tilde{\mathbf{X}}^{F,\text{train}},\tilde{\mathbf{w}}^{\text{train}}_{\hat{J}})$ and $ (\tilde{\mathbf{X}}^{F,\text{test}},\tilde{\mathbf{w}}^{\text{test}}_{\hat{J}})$ to select $\hat{J}^{\text{train}}_w$ and $\hat{J}^{\text{test}}_w$.
			\item[3.2] Let $\hat{J}_w=\hat{J}^{\text{train}}_w\cap \hat{J}^{\text{test}}_w$. 
			Compute $\hat{\boldsymbol{\varepsilon}}^{\text{train}} =\tilde{\mathbf{w}}^{\text{train}}_{\hat{J}}  -\tilde{\mathbf{X}}^{F,\text{train}}_{\hat{J}_w} \hat{\boldsymbol{\beta}}^{\text{train}}_{\hat{J}_w}$, where $\hat{\boldsymbol\beta}^{\text{train}}_{\hat{J}_w}$ is the sub-vector of $\hat{\boldsymbol\beta}^{\text{train}}_{\hat{J}^{\text{test}}}$, which is the OLS estimate from the regression of $\tilde{\mathbf{w}}^{\text{train}}$ on $\tilde{\mathbf{X}}^{F,\text{train}}_{\hat{J}^{\text{test}}_w}$. Similarly, compute $\hat{\boldsymbol{\varepsilon}}^{\text{test}}=\tilde{\mathbf{w}}^{\text{test}}_{\hat{J}}-\tilde{\mathbf{X}}^{F,\text{test}}_{\hat{J}_w}\hat{\boldsymbol{\beta}}^{\text{test}}_{\hat{J}_w}$.
		\end{enumerate}	
		\State \textbf{Step 4}: Resample $\hat{\boldsymbol{\varepsilon}}^{(b)}$ by Algorithm 3 from $\hat{\boldsymbol{\varepsilon}}=\hat{\boldsymbol{\varepsilon}}^{\text{train }}\cup \hat{\boldsymbol{\varepsilon}}^{\text{test}}$, for $b=1,\ldots,B$.
		\State \textbf{OUTPUT}: $\mathbf{w}^{(b)}=\tilde{\mathbf{w}}_{\hat{J}}-\hat{\boldsymbol{\varepsilon}}+\hat{\boldsymbol{\varepsilon}}^{(b)}, b=1,\ldots B$ and $\tilde{\boldsymbol{\beta}}_{\hat{J}}$
	\end{algorithmic}
	\caption{ $\mathbf{w}^{(b)}$ generation }
	\label{alg:algorithm 2}
\end{algorithm}

\section{Hybrid Resampling for confidence intervals}
By using the duality between hypothesis tests and confidence regions, we present our algorithm for constructing confidence intervals through a sequence of hypothesis tests. We first introduce the OGA  that have been considered for variable selection in this paper, then we define hypotheses and the corresponding test statistics that involve OGA in Section 3.2. Those test statistics will then be used to construct confidence intervals in Section 3.3.

\subsection{Orthogonal greedy algorithm}
Orthogonal greedy algorithm (OGA) is a  method that based on the framework of $L_2$-boosting procedure introduced by B{\"u}hlmann and Yu (2003) to select the input variables in linear regression in the case $p\gg n$. The OGA that present in this section is a modification of the one introduced by Ing and Lai (2011). This modification applies QR decomposition to improve efficiency, but the results are equivalent to those from original OGA in Ing and Lai (2011).

\begin{algorithm}[!h]
	\begin{algorithmic}[]
		\State \textbf{INPUT}: $\mathbf{X}\in \mathbb{R}^{n\times p}$, $\mathbf{Y} \in \mathbb{R}^n$.
		\State \textbf{Step 1}: Initialize $\mathbf{U}^{(0)}=\mathbf{Y}$, $\hat{J}_0=\emptyset$, empty matrix $\mathbf{Q}_0$ and $\mathbf{R}_0$.
		\State \textbf{Step 2} For $k=1$ to $m$
		\begin{enumerate}
			\item [2.1]Choose $\hat{j}_k \notin \hat{J}_{k-1}$ such that $\mathbf{X}_{\hat{j}_k}$ is most correlated to $\mathbf{U}^{(k-1)}$, i.e.
			$$\hat{j}_k=\arg\max_{j \notin \hat{J}_{k-1}}\|\mathbf{X}_j^T\mathbf{U}^{(k-1)}\|/\|\mathbf{X}_j\|.$$	
			\item [2.2] 	Update $\hat{J}_{k}=\hat{J}_{k-1}\cup \{\hat{j}_k\}$ and compute the QR decomposition 
			$$\mathbf{X}_{\hat{J}_k}=
			\begin{bmatrix} 
			\mathbf{X}_{\hat{J}_{k-1}}&\mathbf{X}_{\hat{j}_k} \end{bmatrix}=
			\begin{bmatrix} 
			\mathbf{Q}_{k-1}&\mathbf{q}_{k} 
			\end{bmatrix}
			\begin{bmatrix} \mathbf{R}_{k-1}&\mathbf{r}_{k,1}\\
			\mathbf{0}^T & r_{k,2} \end{bmatrix}
			=\mathbf{Q}_k\mathbf{R}_k$$
			\item [2.3] Update $\mathbf{U}^{(k)}=\mathbf{U}^{(k-1)}-q_k\beta_k^q$, where $\beta_k^q=q_k^T\mathbf{U}^{(k-1)}$.
		\end{enumerate}	
		\State \textbf{Step 3}: Compute $\hat{\boldsymbol\beta}^{\text{OGA}}$ with its $\hat{j}_k$th entry equals to the $k$th entry of $\mathbf{R}_m^{-1}(\beta_1^q,\ldots,\beta_m^q)^T$ and other entries equal to 0.
		\State \textbf{OUTPUT}: $\hat{J}=\hat{J}_m$, $\hat{\boldsymbol\beta}^{\text{OGA}} \in \mathbb{R}^p$.
	\end{algorithmic}
	\caption{Orthogonal Greedy Algorithm(OGA)}
	\label{alg:algorithm 2}
\end{algorithm}

Here $\mathbf{R}_m^{-1}(\beta_1^q,\ldots,\beta_m^q)^T$ can be computed by backward substitution without calculating the inverse of the upper triangular matrix $\mathbf{R}_m$, and the QR decomposition is used to implement forward stepwise regression, instead of sequentially orthogonalizing the input variables as in Section 2.2 of Ing and Lai (2011).

\subsection{Test statistic computation}
Suppose we want to test $\text{H}_0:\beta_j=\theta$ for a particular $j \in \{1,\ldots,p\}$ given observed samples $(\mathbf{X},\mathbf{Y})$. Note that we don't restrict $j \in \hat{J}$ as $\hat{J}$ is a random variable. Instead, we consider OGA selection as a part of the test statistic computation. Given $(\mathbf{X},\mathbf{Y})$, we first conduct OGA to select $\hat{J}$. If $j\notin \hat{J}$, we simply set the test statistic $T_j(\mathbf{X},\mathbf{Y},\theta)=0$. For $j\in \hat{J}$, we compute $\tilde{\boldsymbol{\beta}}_{\hat{J}}^0$ by Algorithm 2 with $(\mathbf{X},\mathbf{Y})$ and $\hat{J}$, and we consider the asymptotic distribution of $\sqrt{n}(\tilde{\boldsymbol{\beta}}_{\hat{J}}^0-\boldsymbol{\beta}_{\hat{J}})$ under the assumptions that $\hat{J}$ is fixed and $E((y_t-\sum_{j\in \hat{J}}x_{tj}\beta_j)\tilde{\mathbf{x}}_{\hat{J},t}^T)=\mathbf{0}$, where $\tilde{\mathbf{x}}_{\hat{J},t}^T$ is the $t$th row of $\tilde{\mathbf{X}}_{\hat{J}}$ in model \eqref{Decompositon of Y_2} with $J=\hat{J}$. Although the distribution does not hold due to the violation of the assumptions, our resampling approach does not require the knowledge of the true distribution of the test statistics. With the assumption $\hat{J}$ is fixed and other assumptions stated in Section 9.7 of Lai and Xing (2008), $\sqrt{n}(\tilde{\boldsymbol{\beta}}_{\hat{J}}^0-\boldsymbol{\beta}_{\hat{J}})$ has a limiting $N(0,\mathbf{V})$, where

\begin{align}
\mathbf{V}=n(\tilde{\mathbf{X}}_{\hat{J}}^T\tilde{\mathbf{X}}_{\hat{J}})^{-1}\mathbf{S}(\tilde{\mathbf{X}}_{\hat{J}}^T\tilde{\mathbf{X}}_{\hat{J}})^{-1}, \label{V}
\end{align}

\noindent and $S$ is suggested in Lai and Xing (2008) to be 
\begin{align}
\mathbf{S}=\tilde{\mathbf{X}}_{\hat{J}}^T\begin{pmatrix} \hat{w}_1& \ldots& 0\\   & \ddots& \\    0& \ldots& \hat{w}_n \end{pmatrix}\tilde{\mathbf{X}}_{\hat{J}}=\tilde{\mathbf{X}}_{\hat{J}}^T\text{diag}(\mathbf{Y}-\mathbf{X}_{\hat{J}}\tilde{\boldsymbol{\beta}}_{\hat{J}}^0)\tilde{\mathbf{X}}_{\hat{J}}\label{S1 for V}
\end{align}
if $(\mathbf{x}_t,\varepsilon_t)$ in model \eqref{Linear Model} are uncorrelated. For correlated $(\mathbf{x}_t,\varepsilon_t)$, let $\hat{\mathbf{\Gamma}}_0$ denote the right-hand side of \eqref{S1 for V} and define for $\nu\geq 1$.
\begin{align*}
\hat{\mathbf{\Gamma}}_{\nu}=\sum_{t=\nu+1}^{n}(\mathbf{g}_t\mathbf{g}_{t-\nu}^T+\mathbf{g}_{t-\nu}\mathbf{g}_t^T),\quad \text{where } \mathbf{g}_t=(y_t-\mathbf{X}^T_{t\hat{J}}\tilde{\boldsymbol{\beta}}_{\hat{J}}^0)\tilde{\mathbf{x}}_{\hat{J},t}
\end{align*}
then $\mathbf{S}$ is suggested to be 
\begin{align}
\mathbf{S}=\hat{\mathbf{\Gamma}}_0+\sum_{\nu=1}^{q}\big(1-\frac{\nu}{q+1}\big)\hat{\mathbf{\Gamma}}_{\nu}\label{S2 for V}
\end{align}
in which $q\rightarrow \infty$ but $q/n^{1/4}\rightarrow 0$ as $n\rightarrow \infty$. Algorithm 6 summarizes our procedure for computing test statistic $T_j(\mathbf{X},\mathbf{Y},\theta)$ for testing $H_0:\beta_j=\theta$.

\begin{algorithm}[!h]
	\begin{algorithmic}[]
		\State \textbf{INPUT}: $\mathbf{X}\in \mathbb{R}^{n\times p}$, $\mathbf{Y} \in \mathbb{R}^n$, $\theta$.
		\State \textbf{Step 1}: Apply OGA on $(\mathbf{X},\mathbf{Y})$ to select $\hat{J}$.
		\State \textbf{Step 2}: If $j \notin \hat{J}$,  STOP and OUTPUT $T_j(\mathbf{X},\mathbf{Y},\theta)=0$.
		\State \textbf{Step 3}: Compute $\tilde{\boldsymbol{\beta}}_{\hat{J}}$ and $\mathbf{X}_{\hat{J}}$ by Algorithm 2 given $(\mathbf{X},\mathbf{Y})$ and $\hat{J}$.
		\State \textbf{Step 4}: Compute $\mathbf{V}=n(\tilde{\mathbf{X}}_{\hat{J}}^T\tilde{\mathbf{X}}_{\hat{J}})^{-1}\mathbf{S}(\tilde{\mathbf{X}}_{\hat{J}}^T\tilde{\mathbf{X}}_{\hat{J}})^{-1}$, and 
		$$T_j=\frac{\tilde{\boldsymbol{\beta}}_{\hat{J},j}-\theta}{\sqrt{\frac{1}{n}V_{jj}}}$$
		where $V_{jj}$ is the $j$th diagonal entry of $V$ and $S$ is computed by \eqref{S1 for V} or \eqref{S2 for V}.
		\State \textbf{OUTPUT}: $T_j(\mathbf{X},\mathbf{Y},\theta)=|T_j|$ 
	\end{algorithmic}
	\caption{Test statistic for testing $H_0: \beta_j=\theta$ }
\end{algorithm}

\subsection{Confidence intervals by hybrid resampling}
For a particular $j\in \hat{J}$, suppose we know everything about model \eqref{Linear Model} except $\beta_j$. Let
\begin{align}
y_t(j,\theta)=\sum_{i\neq j} x_{ti}\beta_i+x_{tj}\theta+\varepsilon_t,\quad t=1,\ldots,n \label{linear model with theta}
\end{align}
and $\mathbf{Y}(j,\theta)=(y_1(j,\theta),\ldots,y_n(j,\theta))^T$. Then $\mathbf{Y}(j,\beta_j)$ has the same distribution as the observed $\mathbf{Y}$, and hence $T_j(\mathbf{X},\mathbf{Y}(j,\beta_j),\beta_j)$ conditioned on $T_j(\mathbf{X},\mathbf{Y}(j,\beta_j),\beta_j)>0$ also has the same distribution as $T_j(\mathbf{X},\mathbf{Y},\beta_j)$ conditioned on $T(\mathbf{X},\mathbf{Y},\beta_j)>0$. Let $u_{\alpha}(\theta)$ be the $\alpha$-quantile of the distribution of $T_j(\mathbf{X},\mathbf{Y}(j,\theta),\theta)$ given that $T_j(\mathbf{X},\mathbf{Y}(j,\theta),\theta)>0$. It can be computed by simulating $\mathbf{Y}^{(b)}(j,\theta)$, and the corresponding $T_j(\mathbf{X},\mathbf{Y}^{(b)}(j,\theta),\theta)$, for $b=1,\ldots,B$ from \eqref{linear model with theta}. Then, for $j \in \hat{J}$, we have 
\begin{align}
\mathbf{P}(u_{\alpha}(\theta)<T_j(\mathbf{X},\mathbf{Y},\theta)<u_{1-\alpha}(\theta)|\theta=\beta_j)=1-2\alpha. \label{Condition Probabilty }
\end{align}
From \eqref{Condition Probabilty }, we have $\mathbf{P}(\beta_j \in C_{\alpha}^i(\mathbf{X},\mathbf{Y}))=1-2\alpha$ for $j \in \hat{J}$, where 
\begin{align}
C_{\alpha}^j(\mathbf{X},\mathbf{Y})=\{\theta:u_{\alpha}(\theta)<T_j(\mathbf{X},\mathbf{Y},\theta)<u_{1-\alpha}(\theta)\} \label{C_alpha}
\end{align}
Therefore, $C_{\alpha}^j(\mathbf{X},\mathbf{Y})$ is a $100(1-2\alpha)\%$ confidence region for $\beta_j$ for $j\in \hat{J}$. In practice, we cannot simulate $\mathbf{Y}^{(b)}(j, \theta)$ from \eqref{linear model with theta}. Instead, we consider model \eqref{Linear Model decomposition in Capital} with $\boldsymbol{\beta}_{\hat{J}}$ and $\boldsymbol{w}$ replaced by $\tilde{\boldsymbol{\beta}}_{\hat{J}}$ and $\boldsymbol{w}^{(b)}$ that are generated by Algorithm 4 as an approximated model for $\mathbf{Y}$. That is , we assume 
\begin{align}
\hat{\mathbf{Y}}^{(b)}(j,\tilde{\boldsymbol{\beta}}_{\hat{J},j})=\mathbf{X}_{\hat{J}}\tilde{\boldsymbol{\beta}}_{\hat{J}}+\mathbf{w}^{(b)},b=1,\ldots,B \label{Resample Y}
\end{align}
have a similar distribution with $\mathbf{Y}(j,\tilde{\boldsymbol{\beta}}_{\hat{J},j})$. For general $\theta$, we replace the coefficient of $\mathbf{X}_j$ in \eqref{Resample Y} by $\theta$ and get 
\begin{align}
\hat{\mathbf{Y}}^{(b)}(j,\theta)=\hat{\mathbf{Y}}^{(b)}(j,\tilde{\boldsymbol{\beta}}_{\hat{J},j})+(\theta-\tilde{\boldsymbol{\beta}}_{\hat{J},j})\mathbf{X}_j
\end{align}
Replacing $\mathbf{Y}^{(b)}(j,\theta)$ by $\hat{\mathbf{Y}}^{(b)}(j,\theta)$ in \eqref{C_alpha}, with $\hat{u}_{\alpha}(\theta)$ being the $\alpha$-quantile of $T_j(\mathbf{X},\hat{\mathbf{Y}}^{(b)}(j,\theta),\theta)$ given that $T_j(\mathbf{X},\hat{\mathbf{Y}}^{(b)}(j,\theta),\theta)>0$, then an approximated $100(1-2\alpha)\%$ confidence region for $\beta_j$ for $j \in \hat{J}$ is given by 
\begin{align}
\hat{C}_{\alpha}^j(\mathbf{X},\mathbf{Y})=\{\theta:\hat{u}_{\alpha}(\theta)<T_j(\mathbf{X},\mathbf{Y},\theta)<\hat{u}_{1-\alpha}(\theta)\}. \label{C_alpha_hat}
\end{align}
Although $\hat{C}_{\alpha}^j(\mathbf{X},\mathbf{Y})$ may not be an interval, it is often suffices to give only the upper and lower limits of $\hat{C}_{\alpha}^j(\mathbf{X},\mathbf{Y})$ to construct a confidence interval. Our resampling approach for constructing confidence intervals is essentially the same as the hybrid resampling approach introduced by Chuang and Lai (2000). While they consider unconditional confidence intervals, we consider confidence intervals given that $j\in \hat{J}$, or equivalently $T_j(\mathbf{X},\mathbf{Y},\beta_j)>0$. We present our main algorithm for constructing confidence intervals for $\beta_j: j\in\hat{J}$ in Algorithm 7.

\begin{algorithm}[!h]
	\begin{algorithmic}[]
		\State \textbf{INPUT}: $\mathbf{X}\in \mathbb{R}^{n\times p}$, $\mathbf{Y} \in \mathbb{R}^n$, 
		\State \textbf{Step 1}: Apply OGA on $(\mathbf{X},\mathbf{Y})$ to select $\hat{J}$.
		\State  \textbf{Step 2}: Apply Algorithm 4 on $(\mathbf{X},\mathbf{Y})$ to generate $\tilde{\boldsymbol{\beta}}_{\hat{J}}$ and $\mathbf{w}^{(b)},b=1,\ldots,B$.
		\State  \textbf{Step 3}:For each $j\in \hat{J}$,
		\begin{enumerate}
			\item [3.1] For a grid of $\theta$, compute $T_j((\mathbf{X},\mathbf{Y}),\theta)$ by Algorithm 6, $\hat{u}_{\alpha}(\theta)$ and $\hat{u}_{1-\alpha}(\theta)$ in \eqref{C_alpha_hat}.
			\item [3.2] Find $\theta_{u}^j=\min_{\theta}|\hat{u}_{\alpha}(\theta)-T_j((\mathbf{X},\mathbf{Y}),\theta)|$.
			\item [3.3] Find $\theta_{l}^j=\min_{\theta}|\hat{u}_{1-\alpha}(\theta)-T_j((\mathbf{X},\mathbf{Y}),\theta)|$.
		\end{enumerate}
		\State \textbf{OUTPUT}: $(\theta_{l}^j,\theta_{u}^j)$ as the estimated confidence interval for $\beta_j,j\in\hat{J}$	
	\end{algorithmic}
	\caption{Hybrid resampling confidence intervals for selected coefficients}
\end{algorithm}

While Algorithm 7 estimates $100(1-2\alpha)\%$ two-sided confidence intervals for $\beta_j$, it is strict forward to modify it for one-sided intervals. Suppose we want to find a confidence interval of $\beta_j$ of the form $(\theta_{l}^j,\infty)$. We can modify Algorithm 6 to set $T_j(\mathbf{X},\mathbf{Y},\theta)=-\infty$ if $i\notin \hat{J}$ and $T_i(\mathbf{X},\mathbf{Y},\theta)=T_i$ in Step 4 of Algorithm 6 if $i \in \hat{J}$. Then $\hat{u}_{1-\alpha}(\theta)$ in Step 3.1 of Algorithm 7 is the $(1-\alpha)$-quantile of $\{T_j(\mathbf{X},\mathbf{Y}^{(b)}(j,\theta),\theta)\mid T_j(\mathbf{X},\mathbf{Y}^{(b)}(j,\theta),\theta)>-\infty\}$, and we can simply set $\theta_u^j=\infty$. For $\theta_l^j$ in Step 3.3 of Algorithm 7, we can compute it by bisection method. First, we find $a_1$ such that $\hat{u}_{1-\alpha}(a_1)>T_j(\mathbf{X},\mathbf{Y},a_1)$. Usually, we can choose $a_1=\tilde{\boldsymbol{\beta}}_{\hat{J},j}$. Then, we find $r_1$ such that $\hat{u}_{1-\alpha}(r_1)<T_j(\mathbf{X},\mathbf{Y},r_1)$. To find $r_1$, one can start with $r'_1=a_1-2\hat{\sigma}_j$, where $\hat{\sigma}_j=\sqrt{V_{jj}/n}$ that is computed in Step 4 of Algorithm 6. If $\hat{u}_{1-\alpha}(r'_1)< T_j(\mathbf{X},\mathbf{Y},r'_1)$, set $r_1=r'_1$; otherwise let $r'_2=r'_1-\hat{\sigma}_j/2$ and check if $\hat{u}_{1-\alpha}(r'_2)<T_j(\mathbf{X},\mathbf{Y},r'_2)$. This procedure is repeated until one arrives at $\hat{u}_{1-\alpha}(r'_h)<T_j(\mathbf{X},\mathbf{Y},r'_h)$ and sets $r_1=r'_h$. Let $m_1=(a_1+r_1)/2$, if $\hat{u}_{1-\alpha}(m_1)>T_j(\mathbf{X},\mathbf{Y},m_1)$, set $a_2=m_1$ and $r_2=r_1$; otherwise set $a_2=a_1$ and $r_2=m_1$. This procedure is repeated until $a_k-r_k$ is smaller than some threshold $\delta$ or if $k$ reaches some upper bound, and $\theta_l^j$ is chosen to be $m_k=(a_k+r_k)/2$.

\section{Simulation Studies}\label{chapter04}
In this section, we illustrate the convergence rate of our $\mathbf{\beta}_{\hat{J}}$ estimator as stated in Theorem 4 and compare the performance of our approach with other existing methods through simulations under various settings. For all the simulations, we choose the maximum number $k_{\max}$ of factors in Algorithm 1 to be 5 and the number of resampling $B=50$ in Algorithm 4. For the number $m$ of OGA iterations, Ing and Lai (2011) show that the convergence of OGA estimator when $m=O(\sqrt{n/\log p})$. However, for finite $n$, the performance of OGA can be very different for different choices of $m$. As Ing (2019) point out that ``the approximation error decreases as the number $m$ of iterations increases and the sampling variability increases with $m$", and an optimal $m$ to balance such two terms is hard to determine ``because not only does the solution (optimal $m$) involve unknown parameters$\ldots$ but it is unknown which kind of sparsity holds." To overcome this difficulty, Ing (2019) propose a data-driven method to determine $m$. We follow the idea and use HDBIC in Ing and Lai (2011) to choose $m$. Using the notation in OGA presented in Algorithm 5, we define
$$\text{HDBIC}(k)=n\log\|\mathbf{U}^{(k)}\|^2+k\log n\log p,$$
and take $m=\arg\min_{1\leq k\leq K_n}\text{HDBIC}(k)$, where $K_n=2[\sqrt{n/\log p}]$. Note that we first do $K_n$ OGA iterations in Algorithm 5, and then determine $m$ by HDBIC. With the further selection by HDBIC, the choice of $K_n$ is not sensitive to the final $m$.

We consider the martingale regression models \eqref{Linear Model} with 4 different choices of $x_{tj}$ and $\varepsilon_t$.
\begin{enumerate}
	\item \textbf{LAI}. It is the same as that in the example 1 of Ing and Lai(2011), where $x_{tj}=f_t+e_{tj}$ with $f_t$ and $e_{tj}$ are i.i.d.\ standard normal. The errors $\varepsilon_t$ are also i.i.d.\ standard normal. 
	\item \textbf{GARCH}. The errors follow GARCH(1,1) with $\varepsilon_t=\sigma_t\xi_t$, $\sigma_t^2=0.1+0.3\sigma_{t-1}^2+0.3\varepsilon_{t-1}^2$ and $\xi_t$ are i.i.d. $N(0,1)$. For the components of $\mathbf{x}_t$, $x_{tj}=f_t(1+|a_{j}|)+e_{tj}$ with $f_t=0.9f_{t-1}+b_t$, where $a_j$, $e_{tj}$ and $b_t$ are i.i.d.\ standard normal.
	\item \textbf{AR}. The response $y_t$ is related to $y_{t-1}$ by replacing $x_{t1}$ in GARCH setting by $y_{t-1}$. That is,
	$$y_t=\beta_1 y_{t-1}+\sum_{j=2}^p\beta_j x_{tj}+\varepsilon_t.$$
	The predictors $x_{tj}$ are the same as in GARCH setting, and $\varepsilon_t$ are i.i.d.\ standard normal.
	\item \textbf{IID}. The predictors $x_{tj}\sim N(0,2)$ i.i.d.\ and the errors $\varepsilon_t\sim N(0,1)$ i.i.d.
	\item \textbf{MVN}. The rows of $\mathbf{X}$ follow i.i.d.\ multivariate normal distribution $N(0,\Sigma)$, where $\Sigma_{jk}=0.2$ if $j\neq k$ otherwise $\Sigma_{jk}=1$. The errors $\varepsilon_t$ are i.i.d.\ standard normal.
\end{enumerate}    
For all settings, we consider the same $\boldsymbol{\beta}=(\beta_1,\ldots,\beta_p)$, which has the first 10 entries nonzero. We set $\beta_1=\beta_2=0.6$, $\beta_3=0.4$, $\beta_5=\beta_6=\beta_7=0.2$, $\beta_7$ to $\beta_{10}$ equal to 0.1 and the remaining $\beta_j$ are zeros.  

\subsection{Convergence rate of $\tilde{\mathbf{\beta}}_{\hat{J}}$}

We conduct 2000 simulations to verify the convergence rate of our estimator. In the $l$th simulation, let $\tilde{\boldsymbol{\beta}}^{(l)}_{\hat{J}}$ be our estimate in Algorithm 4 with $\hat{J}^{(l)}$ of size $m^{(l)}$ selected and $\boldsymbol{\beta}_{\hat{J}^{(l)}}$ be the corresponding true values. Define the square-root of the mean squared error for the $l$th simulation to be 
\begin{align*}
\sqrt{\text{MSE}}^{(l)}=\sqrt{\frac{1}{m^{(l)}}\norm{\tilde{\boldsymbol{\beta}}_{\hat{J}}^{(l)}-\boldsymbol{\beta}_{\hat{J}^{(l)}}}}
\end{align*}
and we measure the performance of our estimates by average mean squared error
\begin{align*}
\text{AMSE}=\frac{1}{L}\sum_{l=1}^{L}\sqrt{\text{MSE}}^{(l)}.
\end{align*}
From Theorem 4, the convergence rate of $\text{AMSE}$ should be of order $1/\sqrt{n}$, which means $\text{AMSE}$ is expected to be reduced by half when we increase $n$ to $4n$. The results in Table 1 support Theorem 4.

\begin{table}[!h]
	\centering
	\caption{Average mean squared errors of the $\boldsymbol{\beta}_{\hat{J}}$ estimator in Algorithm 4  }
	\begin{tabular}{|c c c c c c |}
		\hline
		$(n,p)$& LAI  &  GARCH  & AR & IID    & MVN                 \\ 
		$(200,250)$	& 0.1240   & 0.0931  &  0.1092             & 0.0583                &  0.1180                                      \\
		$(400,500)$&  0.0693   &  0.0557      &   0.0706               &    0.0393                 &    0.0680                                      \\ 
		$(800,1000)$	&    0.0462 & 0.0289     &   0.0434              &    0.0263                  &    0.0459                                \\ \hline
	\end{tabular}
\end{table}

\subsection{Comparison with existing methods}

In this subsection, we compare our hybrid resampling (HR) approach for computing one-sided $80\%$ confidence intervals for selected $\beta_j$ of the form $(\theta^j_l,\infty)$ with other existing methods. We use equation \eqref{S2 for V} with $q=1$ for computing test statistics. The modification of Algorithm 7 for computing one-sided confidence intervals is described in the last paragraph of Section 3.3. Let $\theta>0$ be a non-zero value of $\beta_j$ so that $\theta\in\{0.1,0.2,0.4,0.6\}$, and $p(\theta)$ be the number of $\beta_j$ equals to $\theta$. Therefore, we have $p(0.1)=4$, $p(0.2)=3$, $p(0.6)=2$ and $p(0.4)=1$. We define $\text{NS}(\theta)=\frac{1}{p(\theta)}\sum_{\beta_j=\theta}\frac{\sum_{l=1}^{N} \mathbb{I}_{\{j\in \hat{J}^l\}}}{N}$ be the average number of selection for the set $\{j:\beta_j=\theta\}$ in $N=2000$ simulation. The performance for each method is measured by the coverage rate $\text{CR}(\theta)=\frac{1}{N\text{NS}(\theta)\text{p}(\theta)}\sum_{\{(j,l):\beta_j=\theta,j\in\hat{J}^l\}}\mathbb{I}_{\{\beta_j\geq \text{LB}\}}$, i.e. the ratio that the true parameter $\theta$ is covered by the estimated confidence intervals with the lower bound $\text{LB}$. The mean $mLB$ and the standard deviation sLB of LB are also reported. The results of comparison are presented in Tables 2 to 6. 

\paragraph{Classical $t$-distribution (t)}
This approach ignores the facts that the set $\hat{J}$ is selected and the coefficients of predictors not being selected can affect the estimates of $\beta_j$ for $j\in\hat{J}$. It uses the $t$-distribution in \eqref{t test statistic} with $\mathbf{X}$ replaced by $\mathbf{X}_{\hat{J}}$ to get the confidence interval. This approach only works well for the strong signals (with nearly 100\% selection rate, and thus negligible selection effect) in IID setting (no spill-over effect for strong signals). In such cases, our HR approach also works well. While the $t$-distribution approach shows significantly reduced coverage for other $\beta_j$, HR has much better performance. 

\paragraph{Instrumental variable (IV)}
This approach ignores the fact that the set $\hat{J}$ is selected. It uses Algorithm 2 to compute $\tilde{\boldsymbol\beta}^0_{\hat{J}}$, which has an approximate $N(\boldsymbol\beta_{\hat{J}},\mathbf{V}/n)$ distribution when $\hat{J}$ is fixed (i.e., no selection effect). Here $\mathbf{V}$ is defined in \eqref{V} with $\mathbf{S}$ chosen to be the one in \eqref{S2 for V} with $q=1$. Actually, considering the time series structures of $x_{tj}$ and $\varepsilon_t$ in GARCH and AR settings, we should choose a larger $q$. Here we mimic the case that $q$ is not well chosen. For fair comparison, HR also uses $q=1$ for constructing confidence intervals.

This approach and HR perform well for strong signals in LAI, IID and MVN settings. It is not surprising for LAI and IID settings, as LAI setting satisfies factor-model assumption A1, and IID setting has no spill-over effect for strong signals. The good performance for strong signals in MVN setting shows that our estimator in Algorithm 2 can handle spill-over effect even if $x_{tj}$ do not follow factor model. Note that the poor performance in MVN setting for post-selection inference (PS) approach, which is described later in this section, indicates that there is strong spill-over effect. 

However, for strong signals in GARCH setting, the coverage rates for IV approach are not close to nominal 0.8. It is because the variance estimates for $\boldsymbol\beta_{\hat{J}}$ are not good, and hence the distribution of the test statistics is no longer close to standard normal. Although a pivotal distribution of the test statistics is also important for our HR approach, HR approach can still work well if the underlying unknown distribution can be well approximated by the empirical distribution of the resampled test statistics. The performance of IV approach for strong signals improves in AR setting, in which the errors are i.i.d.\ instead of GARCH(1,1) in GARCH setting. However, the coverage rate of the coefficient for $y_{t-1}$ in AR setting is still significantly away from 0.8. It may be because $y_{t-1}$ does not follow the factor model and hence the distribution of the test statistic is not close to standard normal. Again, HR approach can still get a coverage rate close to 0.8 for the coefficient of $y_{t-1}$ in AR setting. For weak signals (0.1 and 0.2) that are not often selected by OGA, IV approach in general shows significantly reduced coverage due to the selection effect. HR has much better performance in all settings.

\paragraph{Post selection inference (PS)}
Taylor et al.\ (2014) derive an exact null distribution for their proposed test statistics after forward stepwise model selection in finite samples. They call such conditional inference as {\it post-selection} inference, which can be used to produce confidence intervals for appropriate underlying regression parameters. Using the notations in \eqref{Biase of estimator}, PS approach constructs confidence intervals for the entries of $E\hat{\boldsymbol\beta}^{\text{ols}}_{\hat{J}}$. Therefore, applying such confidence intervals for $\beta_j$, $j\in\hat{J}$, ignores the fact that $E\hat{\boldsymbol\beta}^{\text{ols}}_{\hat{J}}\neq \boldsymbol\beta_{\hat{J}}$ (spill-over effect). 

Let $\mathbf{v}_j$ be the $j$th column of $\mathbf{X}_{\hat{J}}(\mathbf{X}^T_{\hat{J}}\mathbf{X}_{\hat{J}})^{-1}$ and $F_{\mu,\sigma^2}^{[a,b]}$ denote the distribution function of a $N(\mu,\sigma^2)$ random variable truncated to lie in $[a,b]$, i.e.,
\begin{align*}
F_{\mu,\sigma^2}^{[a,b]}(x)=\frac{\Phi((x-\mu)/\sigma)-\Phi((a-\mu)/\sigma)}{\Phi((b-\mu)/\sigma)-\Phi((a-\mu)/\sigma)},
\end{align*}
where $\Phi$ is the distribution function of standard normal. If $\mathbf{Y}\sim N(\mathbf{X}\boldsymbol{\beta},\sigma^2\mathbf{I})$ with known $\sigma$, then Lemmas 1 and 2 of Taylor et al.(2014) show that there exist $\mathcal{V}^{\text{lo}}_j$ and $\mathcal{V}^{\text{up}}_j$ such that the solution $\delta_{\alpha}$ of the equation
\begin{align*}
1-F_{\delta_{\alpha},\sigma_2\|\mathbf{v}_j\|^2}^{[\mathcal{V}^{\text{lo}}_j,\mathcal{V}^{\text{up}}_j]}(\hat{\boldsymbol\beta}^{\text{ols}}_{\hat{J},j})=\alpha
\end{align*}
is the lower bound of the $100(1-\alpha)\%$ one-sided confidence interval for $E\hat{\boldsymbol\beta}^{\text{ols}}_{\hat{J},j}$.

The results show that PS approach does not work in our settings due to the spill-over effect. It only works for strong signals in IID setting. Since $\mathbf{X}_{\hat{J}}$ and $\mathbf{X}_j$ are independent for $j\notin \hat{J}$, the expected amount spilled-over on strong signals is expected to be 0 by \eqref{Biase of estimator} with $\mathbf{X}$ assumed to be random. However, it is not true for weak signals. If the amount spilled-over on a particular weak signal makes the signal even weaker, the corresponding predictor is unlikely to be selected. Therefore, given that $j\in \hat{J}$ and $\beta_j\approx 0$, the amount spilled-over on $\beta_j$ is likely to be of same sign of $\beta_j$, i.e., the expected amount of spill-over is not 0. It is the reason why PS approach does not work for weak signals in IID setting. Since $x_{tj}$ in IID setting do not satisfy Assumptions A1 and B1, HR approach does not help much in this case. However, HR performs much better than PS in all settings.\\       

To conclude, $t$-distribution performs worst among all the approaches, PS approach is not appropriate to construct confidence intervals for $\beta_j$ for $j\in\hat{J}$, IV approach does not work when $\beta_j\approx 0$, and HR approach performs the best in all the settings. Note that if $\beta_j$ are fixed, it will be selected by OGA with high probability (i.e., what we mean strong signal in this Section) as $n\rightarrow\infty$.

\begin{table}[!h]
	\centering
	\caption{Coverage rate and estimated mean lower bound and its standard deviation based on 4 different methods under the setting of \textbf{LAI}.}
	\begin{tabular}{|c c |c c c c c |}
		\hline
		\multicolumn{2}{|c|}{ $\beta_j$}		  & 0.6         & 0.4                                         & 0.2  & 0.1 & Overall \\ \hline
		\multicolumn{2}{|c|}{NS($n=200$)}		  &2000  &   1869    &   382   &   37.25  &          \\\hline
		CR	& t & 0.1033  &  0.0787       & 0.0787   & 0  &  0.1356          \\
		& IV&   0.8250 & 0.8384  & 0.4110  &0.0403   & 0.8182           \\
		&  PS& 0.2195  & 0.2980      &  0.3272  &  0.3020 & 0.2559           \\
		& HR& 0.8117  &  0.8411      &  0.8316  &0.8926   & 0.8248          \\ \hline
		mLB	& t& 0.7138  &   0.5141        & 0.5141  & 0.3581  &            \\
		& IV & 0.5248  & 0.3283   & 0.2145  & 0.1800  &            \\
		& PS &0.5261   & 0.3102       & -0.0800  &  -0.2382 &            \\
		& HR & 0.4994  &  0.1141     &-0.1680   &-0.2578   &            \\ \hline
		sLB	& t& 0.0925  &  0.0826       & 0.0525  & 0.0454  &            \\
		& IV  & 0.0798  & 0.0718      &  0.0517 & 0.0492  &            \\
		& PS & 2.3219  & 3.1147      &2.3654   & 1.7164  &            \\
		& HR  & 0.1601  &  0.3372         &0.2632    & 0.2038  &             \\ \hline
		\multicolumn{2}{|c|}{NS($n=400$)}		  &2000                   &  1999 &  861.33   &    40.25      &                   \\\hline
		CR	& t & 0.0838    & 0.0830          & 0.0004     &  0 &  0.0681         \\
	& IV& 0.8310    & 0.8309         &  0.6146    & 0.0637  &  0.7948         \\
	&  PS& 0.1973    &  0.1666        & 0.3383     &  0.4013 &   0.2112        \\
	& HR& 0.8103    &  0.8084        & 0.8053     &  0.8280 & 0.8087          \\ \hline
	mLB	& t & 0.6846    &0.4859           &   0.3148    &  0.2948 &           \\
	& IV &  0.5495   & 0.3509         & 0.1903     & 0.1551  &           \\
	& PS &0.5498     & 0.4305         &  -0.1472    & -0.6878  &           \\
	& HR &  0.5517    & 0.3400         &   -0.0390   &  -0.1568  &           \\ \hline
	sLB	& t&  0.0608   &  0.0602        &  0.0428    &  0.0323 &           \\
	& IV  & 0.0528    & 0.0528          &   0.0392   &   0.0327 &           \\
	& PS & 2.7299    &  1.5039        & 5.5870      & 3.8454  &           \\
	& HR & 0.0553    & 0.0959         & 0.2240     & 0.1886  &           \\ \hline
	\end{tabular}
\end{table}

\begin{table}[!h]
	\centering
	\caption{Coverage rate and estimated mean lower bound and its standard deviation based on 4 different methods under the setting of \textbf{GARCH}.}
	\begin{tabular}{|c c |c c c c c  |}
		\hline
		\multicolumn{2}{|c|}{ $\beta_j$}		  & 0.6         & 0.4                                         & 0.2  & 0.1 & Overall \\  \hline
		\multicolumn{2}{|c|}{NS($n=200$)}		  &2000  &  2000                  &  1705 &   195  &                         \\\hline
		CR	& t &  0.2180   &  0.2355         &  0.1734   &  0 & 0.2038         \\
		& IV&  0.9150   &   0.9195          &  0.9069    &0.5282   &  0.9014        \\
		&  PS& 0.3057   &  0.3230         & 0.4528    & 0.4090  & 0.3574         \\
		& HR& 0.7782   &  0.7945         &  0.8184   &   0.8141 &  0.7966        \\ \hline
		mLB	& t &   0.6351 & 0.4330          &  0.2374   & 0.1821  &          \\
		& IV &0.5387    & 0.3376          &  0.1473   & 0.0978  &          \\
		& PS & 0.5692   &0.3910            &  0.0417   & -0.3196  &          \\
		& HR &  0.5636  &    0.3542        &  0.0211   & -0.0723  &          \\ \hline
		sLB	&t   &   0.0456  &  0.0453         &  0.0397   &  0.0326 &                   \\
		& IV& 0.0454   &  0.0459         &  0.0400   &0.0326   &          \\
		& PS & 0.7576   & 0.4756           &  2.3355   &  3.4137 &          \\
		& HR & 0.0489   & 0.0801          &   0.1772   & 0.1247   &               \\ \hline
		\multicolumn{2}{|c|}{NS($n=400$)}		  &2000  &  2000                  &  1994 & 398.5   &                 \\\hline
		CR	& t &  0.2303  &  0.2255          & 0.2265    & 0.0006  &  0.2132        \\
		& IV&   0.8962  &  0.9030         &    0.8965  &  0.5169 & 0.8747         \\
		&  PS&0.2955    &   0.2760         & 0.4505    & 0.3808  & 0.3430         \\
		& HR& 0.7710   &   0.7775         & 0.7793   & 0.7246  & 0.7727         \\ \hline
		mLB	& t & 0.6219   &  0.4220         &  0.2225    & 0.1471  &          \\
		& IV & 0.5632   & 0.3633          &  0.1639   & 0.0989  &          \\
		& PS &0.5963    & 0.4090          &  0.1208   &  -0.0140 &          \\
		& HR &  0.5783  &   0.3783        &  0.1579   & -0.0023  &          \\ \hline
		sLB	& t& 0.0293   &  0.0291         & 0.0294    &0.0166   &          \\
		& IV  & 0.0292   &   0.0289         & 0.0287    &  0.0197 &          \\
		& PS & 0.2786   &  0.2240          &  1.0688   &  0.8820 &          \\
		& HR & 0.0291   &  0.0288          &  0.0812   & 0.1054  &          \\\hline
	\end{tabular}
\end{table}

\begin{table}[!h]
	\centering
	\caption{Coverage rate and estimated mean lower bound and its standard deviation based on 4 different methods under the setting of  \textbf{AR}.}
	\begin{tabular}{|c c |c c c c  c |}
		\hline
			\multicolumn{2}{|c|}{ $\beta_j$}		  & 0.6         & 0.4                                         & 0.2  & 0.1 & Overall \\  \hline
		\multicolumn{2}{|c|}{NS($n=200$)}		  &2000  &  1881                  &  476.33 &   51.75   &      \\\hline
			CR & t & 0.2495    &  0.0712        & 0     & 0  &   0.1435        \\
		& IV&  0.8822   &  0.8453        &  0.5430    & 0.1594  & 0.8213          \\
		&  PS& 0.2550    &    0.2605        &  0.3555    & 0.3430  &  0.2692         \\
		& HR&  0.8333   &  0.8522        &  0.8824    &  0.9275 &   0.8477        \\ \hline
		mLB	& t &  0.6699   &  0.5312        &  0.3875    &0.3590   &           \\       
		& IV &   0.5189  &  0.3230        &  0.1965    &  0.1503 &           \\
		& PS & 0.6598    & 0.2845         &   -0.1354   & -0.5437  &           \\
		& HR & 0.5195    &   0.1129        &  -0.2232    & -0.2850  &           \\ \hline
		sLB	&t   & 0.0562 & 0.0850          &  0.0593    & 0.0542   &         \\
		& IV  &  0.1513    & 0.0773         &   0.0598    & 0.0587  &         \\
		& PS &  0.3930   & 3.8635          &  3.0106    &  2.5480  &           \\
		& HR  &  0.1443   & 0.3332         & 0.2461     &  0.1773 &           \\ \hline
		\multicolumn{2}{|c|}{NS($n=400$)}		  &2000  &  2000                  &  1015.66 & 74.75   &                 \\\hline
		CR	& t & 0.2392    & 0.0620         &  0.0003    &0   &  0.1184           \\
		& IV&  0.8520    & 0.8375         & 0.6935     &  0.0836  &   0.8033          \\
		&  PS& 0.2513    &  0.1700        & 0.3656     & 0.3946  &  0.2442         \\
		& HR&  0.7928   &  0.8245        &  0.8336    &  0.8595 &   0.8143        \\ \hline
		mLB	& t & 0.6473    &  0.4904        &   0.3150    & 0.2895   &           \\
		& IV & 0.5670    & 0.3479          & 0.1822     &0.1457   &           \\
		& PS &  0.6449     & 0.4459          & -0.1665     & -0.4293  &           \\
		& HR & 0.5696  & 0.3345           & -0.0647     &  -0.1900  &           \\ \hline
		sLB	& t&  0.0389   &0.0633           &  0.0446    & 0.0353  &           \\
		& IV  & 0.0346    & 0.0538           &   0.0397    &   0.0344 &           \\
		& PS &  0.1670     & 0.6245         &  5.0879     & 2.6451  &           \\
		& HR & 0.0393   & 0.0988         &   0.2242   &  0.1656  &           \\ \hline
	\end{tabular}
\end{table}

\begin{table}[!h]
	\centering
	\caption{Coverage rate and estimated mean lower bound and its standard deviation based on 4 different methods under the setting of \textbf{IID}.}
	\begin{tabular}{|c c |c c c c c |}
		\hline
		\multicolumn{2}{|c|}{ $\beta_j$}		  & 0.6         & 0.4                                         & 0.2  & 0.1 & Overall \\  \hline
		\multicolumn{2}{|c|}{NS($n=200$)}		  &1999.5  &  1791                  &  57.33 &   0.75  &      \\\hline
			CR	& t & 0.7954    &  0.7884        & 0     & 0  &  0.7802         \\
		& IV& 0.7904    & 0.7923          & 0     & 0  & 0.7795          \\
		&  PS& 0.7709    &   0.7962       &   0.1570    &  0 & 0.7734          \\
		& HR&  0.7849   & 0.7839         &  0.2558    & 0  & 0.7764          \\ \hline
		mLB	& t & 0.5522     &0.3600           &  0.2717    & 0.2626  &           \\       
		& IV &  0.5524   & 0.3601         &  0.2729    &0.2643   &           \\
		& PS & 0.2916    & 0.3587         & -0.1950     &  0.2327 &           \\
		& HR & 0.5507    &  0.3352        &  0.2104    &  0.2297 &           \\ \hline
		sLB	&t   & 0.0581 & 0.0488         & 0.0232     &  0.0239   &           \\
		& IV  & 0.0586    & 0.0493         & 0.0244     & 0.0257 &           \\
		& PS &  9.4395   & 0.2178          &  2.8080     &  0.0324  &           \\
		& HR  &  0.0647   & 0.1045         &  0.1230    &  0.1281 &           \\ \hline
		\multicolumn{2}{|c|}{NS($n=400$)}		  &2000  &  2000                  &  303.33 & 0.25   &                 \\\hline
		CR	& t & 0.7953    &  0.8015        &   0.0626    &  0 & 0.7471          \\
		& IV& 0.7950    &  0.7995           &  0.0813    & 0  & 0.7467          \\
		&  PS&  0.7895   & 0.8015         &   0.4220    & 0  &  0.7691         \\
		& HR&   0.7708  &    0.7945      &  0.4868    & 0 &  0.7618         \\ \hline
		mLB	& t & 0.5664    &  0.3663        &  0.2264    & 0.2330  &           \\
		& IV & 0.5665    &  0.3665        &0.2261      &  0.2300 &           \\
		& PS &  0.5060   &   0.3681       &  0.1787    &0.1221  &           \\
		& HR &  0.5680   & 0.3649         &  0.1625    &  0.2100     &           \\ \hline
		sLB	& t& 0.0404    & 0.0392         &  0.0196    & 0  &           \\
		& IV  & 0.0404    &  0.0393         &   0.0203   & 0  &           \\
		& PS & 0.6527    & 0.0486         &    0.1853
		   &  0 &           \\
		& HR &   0.0423   &  0.0456        &   0.1195   & 0   &           \\ \hline
	\end{tabular}
\end{table}

\begin{table}[!h]
	\centering
	\caption{Coverage rate and estimated mean lower bound and its standard deviation based on 4 different methods under the setting of \textbf{MVN}.}
	\begin{tabular}{|c c |c  c c c c |}
		\hline
		\multicolumn{2}{|c|}{ $\beta_j$}		  & 0.6         & 0.4                                         & 0.2  & 0.1 & Overall \\  \hline
		\multicolumn{2}{|c|}{NS($n=200$)}		  &1994.5  &  1555                 &  97.3 &   6.5   &        \\\hline
			CR	& t & 0.1943    & 0.0695          & 0     &  0 &  0.1356         \\
		& IV&   0.8042   &  0.7646        & 0.0274     & 0  &  0.7652         \\
		&  PS&  0.3256   & 0.3164         &  0.2774    &   0.3462 &   0.3204        \\
		& HR&  0.7927   &0.8064          &   0.7021    &  0.6538 &  0.7958         \\ \hline
		mLB	& t & 0.6798    & 0.4946          &  0.4130    &  0.4136 &           \\       
		& IV & 0.5244    &  0.3497        &  0.2795    & 0.2540  &           \\
		& PS &  0.4785   &  0.2704         &  -0.1790    & -0.0480  &           \\
		& HR &  0.4908   &  0.1377        &  -0.0552    & -0.1234  &           \\ \hline
		sLB	&t   &0.0911  &  0.0692        &  0.0482     &  0.0428 &           \\
		& IV  &   0.0872   &  0.0705        &   0.0484   & 0.0417  &           \\
		& PS &  2.0655    & 2.0499           &  3.0333    &    0.5671 &           \\
		& HR  &  0.1802   &   0.3162       &  0.2839    & 0.3053  &           \\ \hline
		\multicolumn{2}{|c|}{NS($n=400$)}		  &2000  &  1980                  & 312.66 & 5.5   &                 \\\hline
		CR	& t &0.1430     & 0.1217         &   0   & 0  &     0.1226      \\
		& IV&  0.8122   &  0.8061        &  0.2228    &  0 &   0.7654        \\
		&  PS&  0.2690   & 0.2152          &  0.2409     &  0 &0.2418           \\
		& HR&  0.7993   & 0.8056          &  0.6866    &   0.4545 &  0.7929         \\ \hline
		mLB	& t &  0.6697   &  0.4712        &   0.3298   & 0.3251  &           \\
		& IV & 0.5475    &  0.3492        &  0.2273    &  0.2171 &           \\
		& PS &  0.5054   &  0.3951        &  0.1222    &  0.2969 &           \\
		& HR & 0.5491    &   0.3119       &   0.0251   &  0.0178 &           \\ \hline
		sLB	& t& 0.0634    & 0.0304 &0.0619          &  0.0281    &              \\
		& IV  &  0.0594   &  0.0577        &  0.0357    & 0.0316  &           \\
		& PS &  2.7383
		   &  1.2978        & 1.2390     &       0.2187   &    \\
		& HR &  0.0629   &  0.1526        &  0.2215
		     & 0.1798  &           \\ \hline
	\end{tabular}
\end{table}

\section{Concluding Remarks}
The problem of constructing confidence intervals after selection is discussed in Benjamini and Yekutieli (2005). They notice that "it is common practice to ignore the issue of selection and multiplicity when it comes to multiple confidence intervals, reporting a selected subset of intervals at their marginal (nominal, unadjusted) level. Confidence intervals are not corrected for multiplicity even when the only reported intervals  are those for the statistically significant parameters" and point out that ``the selection of the parameters for which confidence interval estimates are constructed or highlighted tends to cause reduced average coverage, unless their level is adjusted." They present a procedure to adjust confidence intervals so that ``the expected proportion of parameters not covered by their confidence intervals among the selected parameters, where the proportion is 0 if no parameter is selected", which they called false coverage rate (FCR), is controlled at a predetermined level $\alpha$. This idea is similar to that of controlling false discovery rate (FDR) in multiple hypothesis testing problem. Suppose there are $p$ null hypotheses $H_1,\ldots,H_p$ with the corresponding $p$-values $q_1,\ldots,q_p$ such that if we reject $H_i$ when $q_i<\alpha$, then the probability of false rejection, $\mathbf{P}(q_i<\alpha\mid H_i)$, is less than or equal to $\alpha$. Without adjusting $p$-values, the expected proportion of false rejection can be greater than $\alpha$. A popular procedure to control FDR is Benjamini-Bochberg (BH) procedure; see Benjiamini and Hochberg (1995). In BH procedure, after sorting the $p$-values $q_1\leq \ldots\leq q_p$, hypothesis $H_i$ is rejected if $p_{i}\leq \frac{m\alpha}{p}$, where $m$ is the number of rejection. Note that it would be very hard for a hypothesis to be rejected if $m\ll p$. Similarly, Benjamini and Yekutiel's method of adjusting in our problem is to construct a marginal confidence interval with confidence level $1-\frac{m}{p}\alpha$ for controlling FCR at $\alpha$ level. It is usually too wide when $m\ll p$. Although their procedure can handle selection effect, our simulations show that there is little selection effect for strong signals, and hence their method is not appropriate for our problem.

Besides, similar to the method proposed by Taylor et al.\ (2014), Benjiamini and Yekutieli's method cannot handle spill-over effect. Ing et al.\ (2017) notice such effect after OGA selection for finite $n$. They define the null hypotheses in terms of the semi-population version of OGA, and use the properties of OGA and closed testing principle to develop a procedure for controlling family-wise error rate, the probability of existing false rejection, for testing if $\beta_j=0$ for $j\in \hat{J}$. However, it is unclear how their approach can be extended for constructing confidence intervals for the selected coefficients in martingale regression model. Moreover, our approach does not require $\hat{J}$ to be selected by OGA. As long as a select method can consistently select coefficients that are significantly greater than 0 as $n\rightarrow \infty$, then the algorithms presented in Section 2 still work with OGA replaced by that select method as the theorems in Section 2 do not depend on any select methods. 

The performance of our approach depends on the appropriateness of using factor models for $\mathbf{X}$ and $\boldsymbol{\varepsilon}$ in model \eqref{Linear Model}. Our procedure can modified accordingly if we have some prior knowledge of $\mathbf{X}$ and $\boldsymbol{\varepsilon}$  to improve performance. For instance, if we know that $\boldsymbol{\varepsilon}$ follow GARCH(1,1) model as in Example 2, instead of resampling on $\hat{\boldsymbol{\varepsilon}}$ in Step 4 of Algorithm 4, we can first fit a GARCH(1,1) model on $\hat{\boldsymbol{\varepsilon}}$ to get the estimates of the parameters and $\xi_t, t=1,\ldots,n$, and then do resampling on estimated $\hat{\xi}_t$. However, even with wrong modeling, our approach can still achieve improvement over other methods that do not handle both selection effect and spill-over effect in our simulations.
\section{Appendix}
To prove the main results we need the following results. From Section APPENDIX of Bai and Ng(2002), we have 
\begin{align*}
&\norm{\mathbf{H}}=O_p(1), \norm{\hat{\mathbf{D}}^{-1}}=O_p(1), \norm{\hat{\mathbf{D}}-\mathbf{D}}=O_p(n^{-1/2}),\\ &\norm{\hat{\mathbf{D}}^{-1}-\mathbf{D}^{-1}}=O_p(n^{-1/2})
\end{align*}
Where $\hat{\mathbf{D}}=\frac{1}{n}\hat{\mathbf{F}}^T\hat{\mathbf{F}}$ and $\mathbf{D}=\frac{1}{n} \mathbf{H}^T\mathbf{F}^T\mathbf{F}\mathbf{H}$. Note that here we assume $\hat{\mathbf{F}}^{\hat{k}}=\hat{\mathbf{F}}^r=\hat{\mathbf{F}}$ by Theorem 2.

\begin{lemma}
	Let  $\mathbf{P}_{\hat{F}}=\frac{1}{n}\hat{\mathbf{F}}\hat{\mathbf{D}}^{-1}\hat{\mathbf{F}}^T$ and $\mathbf{P}_{FH}=\frac{1}{n}\mathbf{F}\mathbf{H}\mathbf{D}^{-1}\mathbf{H}^T\mathbf{F}^T=\mathbf{F}(\mathbf{F}^T\mathbf{F})^{-1}\mathbf{F}^T$. We have
	\begin{align*}
	\norm{\mathbf{P}_{\hat{F}}-\mathbf{P}_{FH}}=\norm{\mathbf{P}_{\hat{F}}^{\bot}-\mathbf{P}_{FH}^{\bot}}=O_p(\frac{1}{\sqrt{n}})
	\end{align*}
\end{lemma}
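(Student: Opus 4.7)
The second equality is immediate: since $\mathbf{P}_{\hat{F}}^{\bot}=\mathbf{I}-\mathbf{P}_{\hat{F}}$ and $\mathbf{P}_{FH}^{\bot}=\mathbf{I}-\mathbf{P}_{FH}$, the two differences are negatives of each other, so they have the same Frobenius norm. Thus the whole task reduces to bounding $\norm{\mathbf{P}_{\hat{F}}-\mathbf{P}_{FH}}$.

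The plan is a standard ``add-and-subtract'' decomposition, matching the structure of $\hat{\mathbf{F}}\hat{\mathbf{D}}^{-1}\hat{\mathbf{F}}^T$ to $\mathbf{F}\mathbf{H}\mathbf{D}^{-1}\mathbf{H}^T\mathbf{F}^T$ through intermediate factors. Writing $\Delta=\hat{\mathbf{F}}-\mathbf{F}\mathbf{H}$ and $\delta=\hat{\mathbf{D}}^{-1}-\mathbf{D}^{-1}$, I would decompose
\begin{align*}
n(\mathbf{P}_{\hat{F}}-\mathbf{P}_{FH}) &= \Delta\,\hat{\mathbf{D}}^{-1}\hat{\mathbf{F}}^T + \mathbf{F}\mathbf{H}\,\delta\,\hat{\mathbf{F}}^T + \mathbf{F}\mathbf{H}\mathbf{D}^{-1}\Delta^T,
\end{align*}
and then bound each of the three pieces in Frobenius norm using sub-multiplicativity.

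For this I would collect the ingredients already available. Theorem 1 gives $\frac{1}{n}\sum_{t=1}^{n}\norm{\hat f_t-\mathbf{H}^T f_t}^2=O_p(1/n)$, i.e.\ $\norm{\Delta}=O_p(1)$. The appendix bounds quoted from Bai and Ng (2002) give $\norm{\mathbf{H}}=O_p(1)$, $\norm{\hat{\mathbf{D}}^{-1}}=O_p(1)$, $\norm{\mathbf{D}^{-1}}=O_p(1)$, and $\norm{\delta}=O_p(n^{-1/2})$. Finally, since $\frac{1}{n}\hat{\mathbf{F}}^T\hat{\mathbf{F}}=\hat{\mathbf{D}}=O_p(1)$ and $r$ is fixed, $\norm{\hat{\mathbf{F}}}^2=n\,\mathrm{tr}(\hat{\mathbf{D}})=O_p(n)$, so $\norm{\hat{\mathbf{F}}}=O_p(\sqrt n)$; the same argument via Assumption B1 gives $\norm{\mathbf{F}\mathbf{H}}=O_p(\sqrt n)$.

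Plugging in, the first piece is at most $\norm{\Delta}\norm{\hat{\mathbf{D}}^{-1}}\norm{\hat{\mathbf{F}}}=O_p(1)\cdot O_p(1)\cdot O_p(\sqrt n)=O_p(\sqrt n)$, the second is at most $\norm{\mathbf{F}\mathbf{H}}\norm{\delta}\norm{\hat{\mathbf{F}}}=O_p(\sqrt n)\cdot O_p(n^{-1/2})\cdot O_p(\sqrt n)=O_p(\sqrt n)$, and the third is at most $\norm{\mathbf{F}\mathbf{H}}\norm{\mathbf{D}^{-1}}\norm{\Delta}=O_p(\sqrt n)$. Dividing by $n$ yields $\norm{\mathbf{P}_{\hat{F}}-\mathbf{P}_{FH}}=O_p(1/\sqrt n)$, as claimed.

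The only delicate point I expect is bookkeeping across mixed norms: Theorem 1 controls $\Delta$ only in the Frobenius sense, so I want to apply $\norm{AB}_F\le\norm{A}_{\mathrm{op}}\norm{B}_F$ (or the cruder $\norm{AB}_F\le\norm{A}_F\norm{B}_F$) consistently, and I must exploit that $\hat{\mathbf{D}}^{-1}$, $\mathbf{D}^{-1}$, $\mathbf{H}$, $\delta$ are all $r\times r$ with $r$ fixed, so their Frobenius and operator norms coincide up to a constant depending only on $r$. Once this is kept straight, the three bounds above fall out directly and no further probabilistic work is needed.
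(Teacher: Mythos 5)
Your proof is correct and follows essentially the same route as the paper's: both reduce the orthogonal-complement claim to the projector difference, telescope $\hat{\mathbf{F}}\hat{\mathbf{D}}^{-1}\hat{\mathbf{F}}^T-\mathbf{F}\mathbf{H}\mathbf{D}^{-1}\mathbf{H}^T\mathbf{F}^T$ into three add-and-subtract terms, and bound them using $\norm{\hat{\mathbf{F}}-\mathbf{F}\mathbf{H}}=O_p(1)$ from Theorem 1 together with the Bai--Ng appendix facts $\norm{\mathbf{H}}=O_p(1)$, $\norm{\hat{\mathbf{D}}^{-1}}=O_p(1)$, $\norm{\hat{\mathbf{D}}^{-1}-\mathbf{D}^{-1}}=O_p(n^{-1/2})$ and $\norm{\hat{\mathbf{F}}},\norm{\mathbf{F}\mathbf{H}}=O_p(\sqrt n)$. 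The only difference is the order in which the three factors are swapped in the telescoping (you replace left, middle, right; the paper replaces left, right, middle), which is immaterial.
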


\begin{proof}[Proof of Lemma 2]
	\begin{align*}
	\norm{\mathbf{P}_{\hat{F}}-\mathbf{P}_{FH}}&=\frac{1}{n}\parallel\hat{\mathbf{F}}\hat{\mathbf{D}}^{-1}\hat{\mathbf{F}}^T-\mathbf{F}\mathbf{H}\hat{\mathbf{D}}^{-1}\hat{\mathbf{F}}^T+\mathbf{F}\mathbf{H}\hat{\mathbf{D}}^{-1}\hat{\mathbf{F}}^T-\mathbf{F}\mathbf{H}\hat{\mathbf{D}}^{-1}\mathbf{H}^T\mathbf{F}^T\\
	&+\mathbf{F}\mathbf{H}\hat{\mathbf{D}}^{-1}\mathbf{H}^T\mathbf{F}^T-\mathbf{F}\mathbf{H}\mathbf{D}^{-1}\mathbf{H}^T\mathbf{F}^T\parallel\\
	&\leq \frac{1}{n}\norm{\hat{\mathbf{F}}-\mathbf{F}\mathbf{H}}\norm{\hat{\mathbf{D}}^{-1}}\norm{\hat{\mathbf{F}}}+\frac{1}{n}\norm{\mathbf{F}\mathbf{H}}\norm{\hat{\mathbf{D}}^{-1}}\norm{\hat{\mathbf{F}}-\mathbf{F}\mathbf{H}}\\
	&+\frac{1}{n}\norm{\mathbf{F}\mathbf{H}}^2\norm{\hat{\mathbf{D}}^{-1}-\mathbf{D}^{-1}}\leq O_p(\frac{1}{\sqrt{n}})
	\end{align*}
	Note that 
	\begin{align*}
	&\sqrt{\frac{1}{n}}\norm{\hat{\mathbf{F}}-\mathbf{F}\mathbf{H}}=\sqrt{\frac{1}{n}\sum_{t=1}^{n}\norm{\hat{f}_t-\mathbf{H}^Tf_t}^2}=O_p(\frac{1}{\sqrt{n}})\\
	&\sqrt{\frac{1}{n}}\norm{\hat{\mathbf{F}}}\leq\sqrt{\frac{1}{n}}\norm{\hat{\mathbf{F}}-\mathbf{F}\mathbf{H}}+\sqrt{\frac{1}{n}}\norm{\mathbf{F}}\norm{\hat{\mathbf{H}}}=O_p(1)
	\end{align*}
\end{proof}

%

\begin{lemma}
	\begin{align*}
	&E(\frac{1}{\sqrt{n}}\sum_{j\in J^c}|\beta_j|\norm{\mathbf{e}_j})\leq M^{1+
		\frac{1}{8}}\\
	&E(\frac{1}{\sqrt{n}}\sum_{j\in J^c}|\beta_j|\norm{\mathbf{E}_J^T\mathbf{e}_j})\leq \sqrt{m}M^{1+
		\frac{1}{2}}
	\end{align*}
\end{lemma}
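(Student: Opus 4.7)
The plan is to handle both bounds by a Jensen/Cauchy--Schwarz reduction to second moments, which are directly controllable by Assumptions B3 and C4, and then to apply the sparsity bound A3 to pull $\sum_{j\in J^c}|\beta_j|\leq M$ out of the sum.

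For the first inequality, note that $\norm{\mathbf{e}_j}^2=\sum_{t=1}^n e_{tj}^2$. I would first apply Jensen's inequality to the concave function $x\mapsto\sqrt{x}$ to obtain
\begin{align*}
E\norm{\mathbf{e}_j}\leq \sqrt{E\norm{\mathbf{e}_j}^2}=\sqrt{\sum_{t=1}^n E e_{tj}^2}.
\end{align*}
Then, by the power--mean inequality (or Jensen's inequality applied to $x\mapsto x^4$), Assumption B3.1 gives $E e_{tj}^2\leq (E e_{tj}^8)^{1/4}\leq M^{1/4}$, so $E\norm{\mathbf{e}_j}\leq \sqrt{n}\,M^{1/8}$. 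Plugging back in and invoking A3:
\begin{align*}
E\Bigl(\tfrac{1}{\sqrt{n}}\sum_{j\in J^c}|\beta_j|\norm{\mathbf{e}_j}\Bigr)\leq \tfrac{1}{\sqrt{n}}\sum_{j\in J^c}|\beta_j|\,\sqrt{n}\,M^{1/8}\leq M\cdot M^{1/8}=M^{1+\tfrac{1}{8}}.
\end{align*}

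For the second inequality, write $\mathbf{E}_J^T\mathbf{e}_j$ as the $m$-vector with $i$th entry $\sum_{t=1}^n e_{ti}e_{tj}$ (for $i\in J$). By Jensen again,
\begin{align*}
E\norm{\mathbf{E}_J^T\mathbf{e}_j}\leq \sqrt{\sum_{i\in J}E\Bigl(\sum_{t=1}^n e_{ti}e_{tj}\Bigr)^{\!2}}.
\end{align*}
Since $j\in J^c$, Assumption C4 (applied with the roles of the indices matched to its statement) yields
\begin{align*}
\sum_{i\in J}E\Bigl(\tfrac{1}{\sqrt{n}}\sum_{t=1}^n e_{ti}e_{tj}\Bigr)^{\!2}\leq mM,
\end{align*}
so $E\norm{\mathbf{E}_J^T\mathbf{e}_j}\leq \sqrt{mn\,M}$. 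Multiplying by $|\beta_j|/\sqrt{n}$, summing, and applying A3 gives
\begin{align*}
E\Bigl(\tfrac{1}{\sqrt{n}}\sum_{j\in J^c}|\beta_j|\norm{\mathbf{E}_J^T\mathbf{e}_j}\Bigr)\leq \sqrt{m}\,\sqrt{M}\sum_{j\in J^c}|\beta_j|\leq \sqrt{m}\,M^{1+\tfrac{1}{2}}.
\end{align*}

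There is no real obstacle here: both estimates are essentially two applications of Jensen followed by the relevant moment/dependence assumption, and the whole argument takes a few lines. The only point to be careful about is matching the index roles in C4 (the bound is indexed by $i\notin J$ with summation over $j\in J$, while here $j\in J^c$ plays the role of that outside index), and making sure the $1/\sqrt{n}$ normalization inside C4 is transferred correctly so that the final $\sqrt{m}$ factor appears without any stray $\sqrt{n}$.
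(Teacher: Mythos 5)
Your proof is correct and follows essentially the same route as the paper: Jensen's inequality to pass the expectation inside the square root, the moment bound $Ee_{tj}^2\leq (E|e_{tj}|^8)^{1/4}\leq M^{1/4}$ from B3.1 for the first bound, Assumption C4 (with the index roles swapped exactly as you note) for the second, and A3 to absorb $\sum_{j\in J^c}|\beta_j|\leq M$. Your handling of the $1/\sqrt{n}$ normalization in C4 is in fact written more carefully than the paper's own intermediate display, which contains a typographical slip but arrives at the same conclusion.
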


\begin{proof}[Proof of Lemma 3]
	\begin{align*}
	E(\frac{1}{\sqrt{n}}\sum_{j\in J^c}|\beta_j|\norm{\mathbf{e}_j})&=\frac{1}{\sqrt{n}}\sum_{j\in J^c}|\beta_j| E\left(\sqrt{\sum_{t=1}^{n}e_{tj}^2}\right)\\
	&\leq \frac{1}{\sqrt{n}} \sum_{j\in J^c}|\beta_j|\sqrt{\sum_{t=1}^{n}Ee_{tj}^2}\\
	&\leq \sum_{j\in J^c}|\beta_j| \sqrt{\frac{1}{n}\sum_{t=1}^{n}M^{\frac{1}{4}}}\\
	&\leq M^{1+\frac{1}{8}}
	\end{align*}
	
	\begin{align*}
	E(\frac{1}{\sqrt{n}}\sum_{j\in J^c}|\beta_j|\norm{\mathbf{E}_J^T\mathbf{e}_j})&=E(\frac{1}{\sqrt{n}}\sum_{i\in J^c}|\beta_i|\sqrt{\sum_{j\in J}(\sum_{t=1}^{n}e_{ti}e_{tj})^2}\\
	&\leq \sum_{i\in J^c}|\beta_i| \sqrt{m E(\frac{1}{m}\sum_{j \in J}(\frac{1}{n}e_{ti}e_{tj})^2)}\\
	&\leq \sqrt{m} M^{1+\frac{1}{2}}
	\end{align*}
\end{proof}

\begin{proof}[Proof of Theorem 3]
	
	Consider
	\begin{align*}
	\frac{\tilde{\mathbf{X}}_J^T\tilde{\mathbf{X}}_J}{n}&=\frac{1}{n}(\mathbf{P}_{\hat{F}}^{\bot}\mathbf{X}_J)^T(\mathbf{P}_{\hat{F}}^{\bot}\mathbf{X}_J)\\
	&=\frac{1}{n}(\mathbf{P}_{\hat{F}}^{\bot}\mathbf{F}\mathbf{\lambda}_J+\mathbf{P}_{\hat{F}}^{\bot}\mathbf{E}_J)^T(\mathbf{P}_{\hat{F}}^{\bot}\mathbf{F}\mathbf{\lambda}_J+\mathbf{P}_{\hat{F}}^{\bot}\mathbf{E}_J)\\
	&=\frac{1}{n}(\mathbf{P}_{\hat{F}}^{\bot}\mathbf{F}\lambda_J)^T(\mathbf{P}_{\hat{F}}^{\bot}\mathbf{F}\mathbf{\lambda}_J)+\frac{2}{n}(\mathbf{P}_{\hat{F}}^{\bot}E_J)^T(\mathbf{P}_{\hat{F}}^{\bot}\mathbf{F}\mathbf{\lambda}_J)+\frac{1}{n}(\mathbf{P}_{\hat{F}}^{\bot}\mathbf{E}_J)^T(\mathbf{P}_{\hat{F}}^{\bot}\mathbf{E}_J)
	\end{align*} 	
	
	Therefore 
	\begin{align*}
	\norm{\frac{\tilde{\mathbf{X}}_J^T\tilde{\mathbf{X}}_J}{n}-\frac{\mathbf{E}_J^T\mathbf{E}_J}{n}}\leq \frac{1}{n}\norm{\mathbf{P}_{\hat{F}}^{\bot}\mathbf{F}\mathbf{\lambda}_J}^2+\frac{2}{n}\norm{\mathbf{P}_{\hat{F}}^{\bot}\mathbf{E}_J} \norm{\mathbf{P}_{\hat{F}}^{\bot}\mathbf{F}\mathbf{\lambda}_J}+\frac{1}{n}\norm{\mathbf{P}_{\hat{F}}\mathbf{E}_J}^2
	\end{align*}
	Note that 
	\begin{align*}
	\frac{1}{\sqrt{n}}\norm{\mathbf{P}_{\hat{F}}^{\bot}\mathbf{F}\mathbf{\lambda}_J} 
	&\leq \frac{1}{\sqrt{n}}\norm{(\mathbf{P}_{\hat{F}}^{\bot}-\mathbf{P}_{FH}^{\bot})\mathbf{F}\mathbf{\lambda}_J}+\frac{1}{\sqrt{n}}\norm{\mathbf{P}_{FH}^{\bot}\mathbf{F}\mathbf{\lambda}_J}\\
	&\leq \frac{1}{\sqrt{n}}\norm{\mathbf{P}_{\hat{F}}^{\bot}-\mathbf{P}_{FH}^{\bot}}\norm{\mathbf{F}}\sqrt{\sum_{j\in J}\norm{\lambda_j}^2}\\
	&\leq O_p(\frac{1}{\sqrt{n}})\sqrt{m}\bar{\lambda}=O_p(\sqrt{m/n})\\
	\end{align*}
	
	\begin{align*}
	\frac{1}{\sqrt{n}}\norm{\mathbf{P}_{\hat{F}}\mathbf{E}_J}
	&\leq \frac{1}{\sqrt{n}}\norm{\mathbf{P}_{\hat{F}}-\mathbf{P}_{FH}}\norm{\mathbf{E}_J}+\frac{1}{\sqrt{n}}\norm{\mathbf{P}_{FH}\mathbf{E}_J}\\
	&\leq O_p(\frac{1}{\sqrt{n}})+\frac{1}{\sqrt{n}}\norm{\frac{1}{n}\mathbf{F}\mathbf{H}\mathbf{D}^{-1}\mathbf{H}^T\mathbf{F}^T\mathbf{E}_J}\\
	&\leq O_p(\frac{1}{\sqrt{n}})+\frac{1}{n}(\frac{1}{\sqrt{n}}\norm{\mathbf{F}})\norm{\mathbf{H}}^2\norm{\mathbf{D}^{-1}}\sqrt{\sum_{j\in J}\norm{\mathbf{F}^T\mathbf{e}_j}^2}\\
	&=O_p(\frac{1}{\sqrt{n}})+O_p(1)\frac{1}{n}\sqrt{\sum_{j\in J}\norm{\sum_{t=1}^{n}f_te_{t_j}}^2}\\
	&=O_p(\frac{1}{\sqrt{n}})+O_p(1)\frac{1}{\sqrt{n}}\sqrt{m} = O_p(\sqrt{m/n})\\
	\end{align*}
	
	\begin{align*}
	\frac{1}{\sqrt{n}}\norm{\mathbf{P}_{\hat{F}}^{\bot}\mathbf{E}_J}
	&=\frac{1}{\sqrt{n}}\norm{\mathbf{E}_J}-\frac{1}{\sqrt{n}}\norm{\mathbf{P}_{\hat{F}}\mathbf{E}_J}\\
	&=O_p(1)+O_p(\sqrt{m/n})
	\end{align*}
	Therefore 
	\begin{align*}
	&\norm{\frac{\tilde{\mathbf{X}}_J^T\tilde{\mathbf{X}}_J}{n}-\frac{\mathbf{E}_J^T\mathbf{E}_J}{n}}=O_p(\sqrt{m/n})\rightarrow 0 \quad\text{as}\quad n\rightarrow \infty\\
	\Rightarrow & \norm{\frac{\tilde{\mathbf{X}}_J^T\tilde{\mathbf{X}}_J}{n}-\mathbf{G}_J}\leq \norm{\frac{\tilde{\mathbf{X}}_J^T\tilde{\mathbf{X}}_J}{n}-\frac{\mathbf{E}_J^T\mathbf{E}_J}{n}}+\norm{\frac{\mathbf{E}_J^T\mathbf{E}_J}{n}-\mathbf{G}_J}\rightarrow 0
	\end{align*}
	Since $\mathbf{G}_J\succ 0$, therefore $\Big(\frac{\tilde{\mathbf{X}}_J^T\tilde{\mathbf{X}}_J}{n}\Big)^{-1}=O_p(1)$
\end{proof}

\begin{proof}
	\begin{align*}
	\tilde{\boldsymbol{\beta}}_J-\boldsymbol{\beta}_J&=(\tilde{\mathbf{X}}_J^T\tilde{\mathbf{X}}_J)^{-1}\tilde{\mathbf{X}}_J^TY-\boldsymbol{\beta}_J\\
	&=(\tilde{\mathbf{X}}_J^T\tilde{\mathbf{X}}_J)^{-1}\tilde{\mathbf{X}}_J^T(\mathbf{X}_J\boldsymbol{\beta}_J+\mathbf{X}_{J^c}\boldsymbol{\beta}_{J^c}+\boldsymbol{\varepsilon})-\boldsymbol{\beta}_J\\
	&=(\tilde{\mathbf{X}}_J^T\tilde{\mathbf{X}}_J)^{-1}\tilde{\mathbf{X}}_J^T(\mathbf{P}_{\hat{F}}^{\bot}\mathbf{X}_J\boldsymbol{\beta}_J+\mathbf{P}_{\hat{F}}X_{J}\boldsymbol{\beta}_{J}+\mathbf{X}_{J^c}\boldsymbol{\beta}_{J^c}+\boldsymbol{\varepsilon})-\boldsymbol{\beta}_J\\
	&=(\tilde{\mathbf{X}}_J^T\tilde{\mathbf{X}}_J)^{-1}\mathbf{\tilde{X}}_J^T(\mathbf{F}\mathbf{\lambda}_{J^c}\boldsymbol{\beta}_{J^c}+\mathbf{E}_{J^c}\boldsymbol{\beta}_{J^c}+\boldsymbol{\varepsilon})
	\end{align*}
	Therefore
	\begin{align*}
	\norm{\tilde{\boldsymbol{\beta}}_J-\boldsymbol{\beta}_J}&\leq \frac{1}{n}\norm{\Big(\frac{(\tilde{\mathbf{X}}_J^T\tilde{\mathbf{X}}_J)}{n}\Big)^{-1}}\Big(\norm{\tilde{\mathbf{X}}_J}\norm{\mathbf{P}_{\hat{F}}^{\bot}\mathbf{F}\boldsymbol{\lambda}_{J^c}\boldsymbol{\beta}_{J^c}}+\norm{\mathbf{X}_J^T\mathbf{P}_{\hat{F}}^{\bot}\mathbf{E}_{J^c}\boldsymbol{\beta}_{J^c}}+\norm{\mathbf{X}_J^T\mathbf{P}_{\hat{F}}^{\bot}\boldsymbol{\varepsilon}}\Big)
	\end{align*}
	Note that 
	\begin{align*}
	\frac{1}{\sqrt{n}}\norm{\mathbf{P}_{\hat{F}}^{\bot}\mathbf{F}\boldsymbol{\lambda}_{J^c}\boldsymbol{\beta}_{J^c}}&\leq \frac{1}{\sqrt{n}}\norm{\mathbf{P}_{\hat{F}}^{\bot}-\mathbf{P}_{FH}^{\bot}}\norm{\mathbf{F}}\norm{\sum_{j\in J^c}\beta_j\lambda_j}\\
	&\leq O_p(\frac{1}{\sqrt{n}})\sum_{j\in J^c}|\beta_j|\norm{\lambda_j}\\
	&\leq O_p(\frac{1}{\sqrt{n}})
	\end{align*}
	
	\begin{align*}
	\frac{1}{n}\norm{\mathbf{X}_J^T\mathbf{P}_{\hat{F}}^{\bot}\boldsymbol{\varepsilon}}&=\frac{1}{n}\norm{(\mathbf{P}_{\hat{F}}^{\bot}\mathbf{F}\lambda_J+\mathbf{P}_{\hat{F}}^{\bot}\mathbf{E}_J)^T\boldsymbol{\varepsilon}}\\
	&\leq \frac{1}{n}\norm{(\mathbf{P}_{\hat{F}}^{\bot}\mathbf{F}\mathbf{\lambda}_J)^T\boldsymbol{\varepsilon}}+\frac{1}{n}\norm{\mathbf{E}_J^T\boldsymbol{\varepsilon}}+\frac{1}{n}\norm{(\mathbf{P}_{\hat{F}}\mathbf{E}_J)^T\boldsymbol{\varepsilon}}\\
	&\leq (\frac{1}{\sqrt{n}}\norm{\mathbf{P}_{\hat{F}}^{\bot}\mathbf{F}\lambda_J})(\frac{1}{\sqrt{n}}\norm{\boldsymbol{\varepsilon}})+\frac{1}{n}\sqrt{\sum_{j \in J}(\sum_{t=1}^{n}e_{tj}\varepsilon_t)^2}+(\frac{1}{\sqrt{n}}\norm{\mathbf{P}_{\hat{F}}\mathbf{E}_J})(\frac{1}{\sqrt{n}}\norm{\boldsymbol{\varepsilon}})\\
	&\leq O_p(\sqrt{\frac{m}{n}})
	\end{align*}
	
	\begin{align*}
	\frac{1}{n}\norm{\mathbf{X}_J^T\mathbf{P}_{\hat{F}}^{\bot}\mathbf{E}_{J^c}\boldsymbol{\beta}_{J^c}}&=\frac{1}{n}\norm{(\mathbf{P}_{\hat{F}}^{\bot}\mathbf{F}\mathbf{\lambda}_J+\mathbf{P}_{\hat{F}}^{\bot}\mathbf{E}_J)^T\mathbf{E}_{J^c}\boldsymbol{\beta}_{J^c}}\\
	&\leq \frac{1}{\sqrt{n}}\norm{(\mathbf{P}_{\hat{F}}^{\bot}\mathbf{F}\lambda_J)^T\mathbf{E}_{J^c}\boldsymbol{\beta}_{J^c}}+\frac{1}{n}\norm{\mathbf{E}_J^T\mathbf{E}_{J^c}\boldsymbol{\beta}_{J^c}}+\frac{1}{n}\norm{(\mathbf{P}_{\hat{F}}\mathbf{E}_J)^T\mathbf{E}_{J^c}\boldsymbol{\beta}_{J^c}}\\ 
	&\leq \frac{1}{\sqrt{n}} \norm{\mathbf{P}_{\hat{F}}^{\bot}\mathbf{F}\mathbf{\lambda}_J}\Big(\frac{1}{\sqrt{n}}\sum_{j\in J^c}|\beta_j|\norm{\mathbf{e}_j}\Big)	+\frac{1}{n}\sum_{j\in J^c}|\beta_j|\norm{\mathbf{E}_J^T\mathbf{e}_j}\\
	&+\Big(\frac{1}{\sqrt{n}}\norm{\mathbf{P}_{\hat{F}}^{\bot}\mathbf{E}_J}\Big) \Big(\frac{1}{\sqrt{n}}\sum_{j\in J^c} |\beta_j|\norm{\mathbf{E}_J^T\mathbf{e}_j} \Big)\\
	&\leq O_p(\sqrt{\frac{m}{n}})
	\end{align*}

\end{proof}


\setcounter{equation}{0} 

\vskip 14pt
\noindent {\large\bf Supplementary Materials}

Contain
the brief description of the online supplementary materials.
\par
\vskip 14pt
\noindent {\large\bf Acknowledgements}

Write the acknowledgements here.
\par

\markboth{\hfill{\footnotesize\rm FIRSTNAME1 LASTNAME1 AND FIRSTNAME2 LASTNAME2} \hfill}
{\hfill {\footnotesize\rm FILL IN A SHORT RUNNING TITLE} \hfill}

\bibhang=1.7pc
\bibsep=2pt
\fontsize{9}{14pt plus.8pt minus .6pt}\selectfont
\renewcommand\bibname{\large \bf References}
\expandafter\ifx\csname
natexlab\endcsname\relax\def\natexlab#1{#1}\fi
\expandafter\ifx\csname url\endcsname\relax
  \def\url#1{\texttt{#1}}\fi
\expandafter\ifx\csname urlprefix\endcsname\relax\def\urlprefix{URL}\fi

\lhead[\footnotesize\thepage\fancyplain{}\leftmark]{}\rhead[]{\fancyplain{}\rightmark\footnotesize{} }

\vskip .65cm
\noindent
Ka Wai Tsang 
\\{\it The Chinese University of Hong Kong, Shenzhen}\\
{\it School of Science \& Engineering and Center for Statistical Science}
\vskip 2pt
\noindent
E-mail: (kwtsang@cuhk.edu.cn)
\vskip 2pt

\noindent
Wei Dai 
\\{\it The Chinese University of Hong Kong, Shenzhen}\\
{\it School of Science \& Engineering and Center for Statistical Science}
\vskip 2pt
\noindent
E-mail: (216019004@link.cuhk.edu.cn)

\begin{thebibliography}{}


\bibitem[\protect\citeauthoryear{Akaike}{Akaike}{1969}]{1969}
Akaike,  H. (1973).
\newblock Information theory and an extension of the maximum likelihood principle
\newblock {\em In 2nd International  Symposium on Information  Theory (ed B. N. Petrov  and  F.  Csaki)  Budapest: Akademia  Kiado\/} pp. 267-81. 


\bibitem[\protect\citeauthoryear{Akaike}{Akaike}{1974}]{1974}
Akaike, H. (1974).
\newblock A new look at the statistical model identification, 
\newblock {\em IEEE Transactions on Automatic Control\/}, \textbf{19(6)}, 716--723.

\bibitem[\protect\citeauthoryear{Bai}{Ng}{2002}]{2002}
Bai, J., and Ng, S. (2002).
\newblock Determining the number of factors in approximate factor models.
\newblock {\em Econometrica\/}, \textbf{70(1)}, 191--221.


\bibitem[\protect\citeauthoryear{Belloni}{Chernozhukov}{2014}]{2014}
Belloni, A.,  Chernozhukov, V., and Hansen, C. (2014).  
\newblock Inference on treatment effects after selection among high-dimensional controls.
\newblock{\em The Review of Economic Studies\/} \textbf{81(2)}, 608--650. 

\bibitem[\protect\citeauthoryear{Belloni}{Chernozhukov}{2015}]{2015}
Belloni, A., Chernozhukov, V.,  Kato, K. (2015). 
\newblock Uniform post-selection inference for least absolute deviation regression and other Z-estimation problems. 
\newblock {\em Biometrika \/}, 102(1), 77-94.

\bibitem[\protect\citeauthoryear{Benjamini}{Hochberg}{1995}]{1995}
Benjamini, Y., and Hochberg, Y. (1995).
\newblock Controlling the false discovery rate: a practical and powerful approach to multiple testing. 
\newblock{\em Journal of the Royal statistical society: series B (Methodological)\/}, \textbf{57(1)}, 289--300.

\bibitem[\protect\citeauthoryear{Benjamini}{Yekutieli}{2005}]{2005}
Benjamini, Y., and Yekutieli, D. (2005). 
\newblock False discovery rate–adjusted multiple confidence intervals for selected parameters. 
\newblock{\em Journal of the American Statistical Association\/}
\textbf{100(469)}, 71--81.

\bibitem[\protect\citeauthoryear{Bickel}{ Tsybakov}{2009}]{2009}
Bickel, P., Ritov, Y., and Tsybakov, A. (2009). 
\newblock Simultaneous analysis of Lasso and Dantzig selector. 
\newblock{\em The Annals of Statistics\/} \textbf{37}, 1705--1732.

\bibitem[\protect\citeauthoryear{Buhlmann}{Buhlmann}{2006}]{2006}
B{\"u}hlmann, P. (2006). 
\newblock Boosting for high-dimensional linear models. 
\newblock{\em The Annals of Statistics\/} \textbf{34}, 559--583.


\bibitem[\protect\citeauthoryear{Buhlmann}{Yu}{2003}]{2003}
B{\"u}hlmann, P., and Yu, B. (2003). 
\newblock Boosting with the $L_2$ loss: regression and classification. 
\newblock{\em Journal of the American Statistical Association\/}, \textbf{98(462)}, 324--339.


\bibitem[\protect\citeauthoryear{Chen}{Chen}{2008}]{2008}
Chen, J., and Chen, Z. (2008).
\newblock Extended Bayesian information criteria for model selection with large model spaces. 
\newblock{\em Biometrika\/} \textbf{95}, 759--771.

\bibitem[\protect\citeauthoryear{Chuang}{Lai}{2000}]{2000}
Chuang, C.S., and Lai, T.L. (2000). 
\newblock Hybrid resampling methods for confidence intervals (with discussion and rejoinder). 
\newblock{\em Statistica Sinica\/}, \textbf{10}, 1--50.

\bibitem[\protect\citeauthoryear{Efroymson}{Efroymson}{1960}]{1960}
Efroymson, M.A.(1960). 
\newblock{\em Multiple regression analysis. 
	Mathematical methods for digital computers.} 
\newblock New York: Wiley, 191--203.	

\bibitem[\protect\citeauthoryear{Fan}{Lv}{2008}]{2008}
Fan, J., and Lv, J. (2008).
\newblock Sure independence screening for ultrahigh dimensional feature space. 
\newblock{\em Journal of the Royal Statis
	tical Society: Series B (Statistical Methodology)\/}, \textbf{70(5)}, 849--911.

\bibitem[\protect\citeauthoryear{Forni}{Lippi}{1997}]{1997}
Forni, M., and Lippi, M.  (1997). 
\newblock {\em Aggregation and the Microfoundations of Dynamic Macroeconomics.\/}
\newblock Oxford, U.K.: Oxford University Press.

\bibitem[\protect\citeauthoryear{Guo}{Lai}{2017}]{2017}
Guo, X., Lai, T.L., Shek, H., and Wong, S.P.S. (2017).
\newblock {\em Quantitative Trading: Algorithms, Analytics, Data, Models, Optimization.\/} 
\newblock Chapman and Hall/CRC.

\bibitem[\protect\citeauthoryear{Hannan}{Quinn}{1979}]{1979}
Hannan, E.J., and Quinn, B.G. (1979). 
\newblock The determination of the order of an autoregression. 
\newblock{\em Journal of the Royal Statistical Society: Series B \/} (Methodological), \textbf{41(2)}, 190--195.

\bibitem[\protect\citeauthoryear{Hurvich}{Tsai}{1991}]{1991}
Hurvich, C.M., and Tsai, C.L. (1991).   
\newblock Bias of the corrected AIC criterion for underfitted regression and time series models.
\newblock{\it Biometrika \/} \textbf{78(3)}, 499--509.

\bibitem[\protect\citeauthoryear{Ing}{Ing}{2019}]{2019}
Ing, C. K. (2019).
\newblock Model selection for high-dimensional linear regression with dependent observations.
\newblock{\em arXiv:1906.07395\/}

\bibitem[\protect\citeauthoryear{Ing}{Lai}{2011}]{2011}
Ing, C.K., and Lai, T.L. (2011).    
\newblock A stepwise regression method and consistent model selection for high-dimensional sparse linear models. 
\newblock{\em Statistica Sinica\/} \textbf{21(4)}, 1473--1513.


\bibitem[\protect\citeauthoryear{Ing}{Lai}{2017}]{2017}
Ing, C.K.,  Lai, T.L., Shen, M., Tsang, K.W.,  and Yu, S.H. (2017).     
\newblock Multiple testing in regression models with applications to fault diagnosis in the big data era. 
\newblock{\em Technometrics,\/} \textbf{59(3)}, 351--360. 

\bibitem[\protect\citeauthoryear{Lai}{Xing}{2008}]{2008}
Lai, T.L., and Xing, H. (2008). 
\newblock{\em Statistical Models and Methods in Financial Markets.\/} Springer, New York.

\bibitem[\protect\citeauthoryear{Lee}{Taylor}{2014}]{2014}
Lee, J.D., and Taylor, J.E. (2014).
\newblock Exact post model selection inference for marginal screening.
\newblock{\em In Advances in Neural Information Processing Systems\/}, 136--144.


\bibitem[\protect\citeauthoryear{Lee}{Lai}{2009}]{2009}
Lee, S. M. S.,  Lai, P. Y. (2009). 
\newblock Double block bootstrap confidence intervals for dependent data. 
\newblock {\em Biometrika \/}, \textbf{96(2)}, 427-443.

\bibitem[\protect\citeauthoryear{Lee}{Wu}{2018}]{2018}
Lee, S. M. S.,  Wu, Y. (2018). 
\newblock A bootstrap recipe for post-model-selection inference under linear regression models. 
\newblock {\em Biometrika \/}, \textbf{105(4)}, 873-890.


\bibitem[\protect\citeauthoryear{Lockhart}{Taylor}{2014}]{2014}
Lockhart, R., Taylor, J., Tibshirani, R., and Tibshirani, R. (2014).  
\newblock A significance test for the Lasso. 
\newblock{\em The Annals of Statistics\/} \textbf{42(2)}, 413--468.

\bibitem[\protect\citeauthoryear{Rao}{Wu}{1989}]{1989}
Rao, C.R., and Wu, Y. (1989).
\newblock A strongly consistent procedure for model selection in a regression problem.
\newblock{\it Biometrika } \textbf{76}, 369--374.


\bibitem[\protect\citeauthoryear{Schwarz}{Schwarz}{1978}]{1978}
Schwarz, G.E.  (1978).  
\newblock Estimating the dimension of a model.
\newblock{\em The Annals of Statistics\/} \textbf{6(2)}, 461--464. 
%
\bibitem[\protect\citeauthoryear{Soric}{Soric}{1989}]{1989}
Sori{\'c}, B. (1989). 
\newblock Statistical "discoveries" and effect-size estimation. 
\newblock{\em Journal of the American Statistical Association}, textbf{84}, 608--610

\bibitem[\protect\citeauthoryear{Stock}{Watson}{1989}]{1989}
Stock, J.H., and Watson, M. (1989). 
\newblock New indexes of coincident and leading economic indications, 
\newblock {\em NBER Macroeconomics Annual 1989}, ed. by O. J. Blanchard and S. Fischer. Cambridge: M.I.T. Press.


\bibitem[\protect\citeauthoryear{Shao}{Shao}{1989}]{1989}
Shao, J. (1997).  
\newblock An asymptotic theory for linear model selection.
\newblock{\em Statistica Sinica} \textbf{7}, 221--264.

\bibitem[\protect\citeauthoryear{Taylor}{Lockhart}{2014}]{2014}
Taylor, J., Lockhart, R., Tibshirani, R.J., and Tibshirani, R. (2014). 
\newblock Post-selection adaptive inference for least angle regression and the lasso.
\newblock{\em arXiv preprint arXiv:1401.3889\/}.


\bibitem[\protect\citeauthoryear{Tibshirani}{v}{1996}]{1996}
Tibshirani, R. (1996). 
\newblock Regression shrinkage and selection via the lasso. 
\newblock{\em Journal of the Royal Statistical Society, Series B\/}, \textbf{58}, 267–288.

\bibitem[\protect\citeauthoryear{Voorman}{Shojaie}{2014}]{2014}
Voorman, A., Shojaie, A., and Witten, D. (2014).   
\newblock``Inference in high dimensions with the penalized score test."
\newblock{\em arXiv preprint arXiv:1401.2678.\/}

\bibitem[\protect\citeauthoryear{Wang}{Wang}{2009}]{2009}
Wang, H. (2009).
\newblock Forward regression for ultra-high dimensional variable screening 
\newblock{\em Journal of the American Statistical Association\/} \textbf{104}, 1512-1524.

\bibitem[\protect\citeauthoryear{Wei}{Wei}{1992}]{1992}
Wei, C.Z.  (1992).  
\newblock On predictive least squares principles.
\newblock{\em The Annals of Statistics} \textbf{20}, 1--42.




\bibitem[\protect\citeauthoryear{Yuan}{Lin}{1989}]{1989}
Yuan, M., and Lin, Y. (2006).
\newblock Model selection and estimation in regression with grouped variables. 
\newblock{\em Journal of the Royal Statistical Society: Series B}. \textbf{68}, 49--67


\bibitem[\protect\citeauthoryear{Zou}{Zou}{2006}]{2006}
Zou, H. (2006). 
\newblock The adaptive lasso and its oracle properties. 
\newblock{\em Journal of the American Statistical Association\/},\textbf{101}, 1418--1429.

\bibitem[\protect\citeauthoryear{Zou}{Hastie}{2005}]{2005}
Zou, H., and Hastie, T. (2005). 
\newblock Regularization and variable selection via the elastic net. 
\newblock{\em Journal of the Royal Statistical Society: Series B}. \textbf{58}. 310--320

\bibitem[\protect\citeauthoryear{Zhang}{Zhang}{2010}]{2010}
Zhang, C.H. (2010). 
\newblock Nearly unbiased variable selection under minimax concave penalty. 
\newblock{\em The Annals of Statistics\/}. \textbf{38}, 894--942.

\bibitem[\protect\citeauthoryear{Zhang}{Zhang}{2014}]{2014}
Zhang, C. H., Zhang, S. S. (2014). 
\newblock Confidence intervals for low dimensional parameters in high dimensional linear models. 
\newblock {\em Journal of the Royal Statistical Society: Series B (Statistical Methodology)\/}, \textbf{76(1)}, 217-242.

%
%







\end{thebibliography}
\end{document}